\newtheorem{theorem}{Theorem}
\newtheorem{lemma}[theorem]{Lemma}
\newtheorem{proposition}{Proposition}
\newtheorem{observation}{Observation}
\theoremstyle{definition}
{
\newtheorem{definition}{Definition}

}
\long\def\symbolfootnote[#1]#2{\begingroup
\def\thefootnote{\fnsymbol{footnote}}\footnote[#1]{#2}\endgroup}
\newcommand\p{\circle*{0.3}}
\newcommand{\red}{\mathrm{red}}
\newcommand{\sg}{\sigma}
\def\P{\mathbb{P}}
\newcommand{\fig}[2]{\begin{figure}[ht]
\centerline{\scalebox{.66}{\epsfig{file=#1.eps}}}
\caption{#2}
\label{fig:#1}
\end{figure}}
\title{Representing Graphs via Pattern Avoiding Words}
\author[M. Jones]{Miles Jones}
\thanks{Instituto de Matem\'atica,Universidad de Talca, Camino Lircay S/N Talca, Chile. Email: \texttt{mjones@inst-mat.utalca.cl} \ Supported by FONDECYT (Fondo Nacional de Desarrollo Cient\'{\i}fico y
Tecnol\'ogico de Chile) postdoctoral grant \#3130631.}
\author[S. Kitaev]{Sergey Kitaev}
\thanks{Department of Computer and Information Sciences, University of Strathclyde, Glasgow G1 1XH, UK. Email: \texttt{sergey.kitaev@cis.strath.ac.uk}}
\author[A. Pyatkin]{Artem Pyatkin}
\thanks{Sobolev Institute of Mathematics; Novosibirsk State University; Novosibirsk, Russia. Email: \texttt{artem@math.nsc.ru}}
\author[J. Remmel]{Jeffrey Remmel}
\thanks{Department of Mathematics, University of California, San Diego, La Jolla, CA 92093-0112. USA. Email: \texttt{jremmel@ucsd.edu}}
\begin{document}
\maketitle

\begin{abstract}
\noindent
The notion of a word-representable graph has been
studied in a series of papers in the literature. A graph $G=(V,E)$ is word-representable if there exists a word $w$ over the alphabet $V$ such that letters $x$ and $y$ alternate in $w$ if and only if $xy$ is an edge in~$E$.
If $V =\{1, \ldots, n\}$, this is equivalent to saying that
$G$ is word-representable if for all
$x,y \in \{1, \ldots, n\}$, $xy \in E$ if and only if
the subword $w_{\{x,y\}}$ of $w$ consisting of all occurrences
of $x$ or $y$ in $w$ has no consecutive occurrence of the pattern 11.

In this paper, we introduce the study of $u$-representable graphs for any word $u \in \{1,2\}^*$. A graph $G$ is $u$-representable if and only if
there is a labeled version of $G$, $G=(\{1, \ldots, n\}, E)$,
and a word $w \in \{1, \ldots, n\}^*$ such that for
all $x,y \in \{1, \ldots, n\}$, $xy \in E$ if and only if
$w_{\{x,y\}}$ has no consecutive
occurrence of the pattern $u$. Thus, word-representable
graphs are just $11$-representable graphs. We show
that for any $k \geq 3$, every finite graph $G$ is
$1^k$-representable. This contrasts with the
fact that not all graphs are 
11-representable graphs.

The main focus of the paper is the study of
$12$-representable graphs.
In particular, we classify the $12$-representable trees.
We show that any $12$-representable graph is a
comparability graph and the class of $12$-representable graphs include the classes of co-interval graphs and permutation graphs.
We also state a number of facts on $12$-representation of induced subgraphs of a grid graph. \\

\noindent {\bf Keywords:} word-representable graphs, pattern avoidance, comparability graphs, co-interval graphs, permutation graphs, grid graphs, ladder graphs
\end{abstract}

\section{Introduction}

The notion of a {\em word-representable} graph was first defined
in \cite{KP2008}.  A graph $G=(V,E)$ is word-representable if there exists a word $w$ over the alphabet $V$ such that letters $x$ and $y$ alternate in $w$ if and only if $xy$ is an edge in $E$. For example, the cycle graph on four vertices labeled by 1, 2, 3 and 4 in clockwise direction can be represented by the word 14213243.  Word-representable graphs have been studied in a series of papers \cite{AKM}--\cite{HKP2011}, \cite{K}, \cite{KP2008}--\cite{KitSalSevUlf2}, and they will be the main subject of an up-coming book~\cite{KL}.

The first  examples of graphs
that are not word-representable were given in \cite{KP2008}. In fact, V. Limouzy [private communication, 2014] noticed that 
it is NP-hard to determine whether a given graph is word-representable, see \cite{KL} for the details. 
In \cite{KS} it was proved that any comparability graph $G$ is not just word-representable, but it is {\em permutationally} word-representable. That is, for the graph $G$, there exists a word $w$ with the necessary letter alternation properties such that $w$ is obtained by concatenating a number of permutations.

The key observation that motivated this paper
was the fact that the study of word-representable graphs
is naturally connected with the study of patterns in
words. That is, let $\P = \{1,2, \ldots \}$ be the set of positive integers and $\P^*$ be the set of all words over $\P$. If $n \in \P$, then
we let $[n] = \{1, \ldots, n\}$ and $[n]^*$ denote the set of
all words over $[n]$. Given a word $w =w_1 \ldots w_n$ in $\P^*$, we let
$A(w)$ be the set of letters occurring in $w$.  For example,
if $w = 4513113458$, then $A(w) = \{1,3,4,5,8\}$.  If
$B \subseteq A(w)$, then we let $w_B$ be the word that results
from $w$ by removing all the letters in $A(w) \setminus B$.  For
example, if  $w = 4513113458$, then $w_{\{1,3,5\}} =
5131135$. If $u \in \P^*$, we let
$\red(u)$ be the word that is obtained from $u$ by replacing
each occurrence of the $i$-th smallest letter that occurs in
$w$ by $i$. For example, if $u = 347439$, then
$\red(u) = 123214$.

Given a word $u =u_1 \ldots u_j \in \P^*$ such
that $\red(u) =u$,  we say
that a word $w = w_1 \ldots w_n \in \P^*$ has a {\em $u$-match starting at position $i$} if $\red(w_i w_{i+1} \ldots w_{i+j-1}) =u$.
Then
we can rephrase the definition of word-representable graphs
by saying that a graph $G$ is word-representable
if and only if there is a labeling $G=([n],E)$,  and
a word $w \in [n]^*$ such that for all $x,y \in [n]$,
$xy \in E$ if and only if $w_{\{x,y\}}$ has no $11$-matches.

This led us to the following defintion.
Given a word $u \in [2]^*$ such that $\red(u) =u$, we
say that a graph $G$ is {\em $u$-representable} if
and only if there is a labeling
$G=([n],E)$, and a word $w \in [n]^*$ such
that for all $x,y \in [n]$, $xy \in E$ if and only if
$w_{\{x,y\}}$ has no $u$-matches. 
In this case we say
that $w$ $u$-represents 
$G=([n],E)$.

This definition leads to a number of natural
questions.  For example, how much of the theory
of $11$-representable graphs carries over to
$u$-representable graphs? Can we
classify the $u$-representable graphs for
small words $u$ such as $u =111$, $u=1111$, $u=12$, or
$u =121$?  If a graph $G=([n],E)$ is $u$-representable, can
we always find a word $w$ which is a shuffle  of
a finite set of permutations representing some labeled version
of $G$?

Given how involved the theory of word-representable graphs is, our
first surprise was the fact that every graph is $111$-representable.  Indeed, we will show that for every $k \geq 3$,
every graph is $1^k$-representable. Thus, we decided to explore the
next simplest case, which is the class of $12$-representable
graphs. 

It turns out that there is a rich theory behind the class of
$12$-representable graphs. For example,
we will show that not every  graph is $12$-representable.
In particular,
the cycles $C_n$ for $n \geq 5$ are not
$12$-representable.  We will also show that there are non-$12$-representable trees, which contrasts with
the fact that every tree is $11$-representable.  In fact, we
will give a complete classification of the $12$-representable
trees.  We say that a tree $T =(V,E)$ is a {\em double caterpillar}
if and only if all vertices are within distance 2 of a central
path.  We will prove that a tree $T$ is $12$-representable
if and only if $T$ is a double caterpillar.

Further, we will show that the class of 12-representable
graphs is properly included in the class of comparability graphs, and it properly includes the classes of co-interval graphs and permutation graphs as shown in Figure~\ref{overview}. It turns out that the notion of $12$-representable graphs is a natural generalization of the notion of permutation graphs.

Figure~\ref{overview} also gives examples of graphs inside/outside the involved graph classes. For instance, even cycles of length at least 6, being comparability graphs, are not 12-representable (see Theorem~\ref{cycles-thm} in Section~\ref{12-matching-repr}); also, odd wheels on six or more vertices are not 11-representable \cite{KP2008}.

\begin{figure}[ht]
\begin{center}
\includegraphics[scale=0.55]{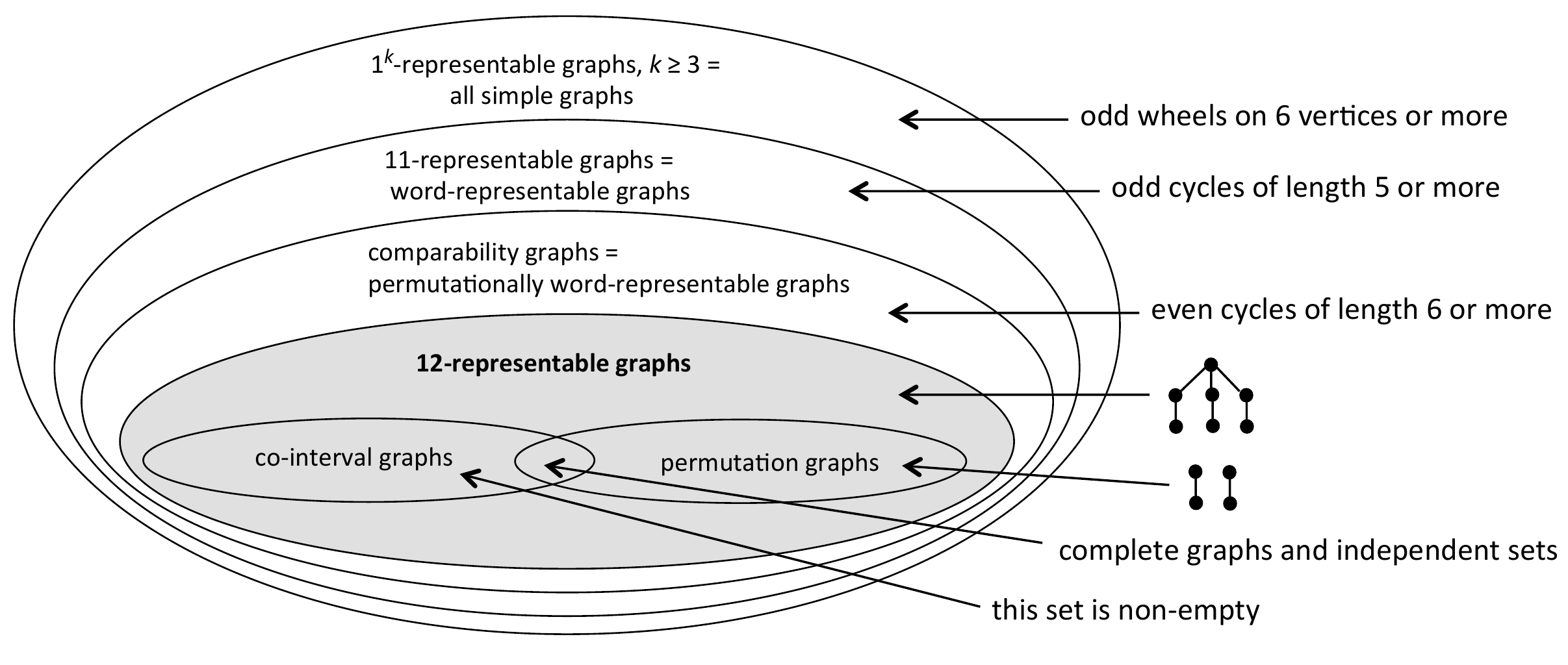}
\end{center}
\vspace{-10pt}
\caption{The place of 12-representable graphs in a hierarchy of graph classes.}
\label{overview}
\end{figure}

A fundamental difference between word-representable graphs and $u$-representable graphs is the following.  For word-representable graphs,
it is not so important whether we deal with labeled or unlabeled graphs: two isomorphic graphs are both either word-representable or not.
On the other hand, for certain $u$, a given graph $G$ may have
two different labeled versions $G_1 =([n], E_1)$ and
$G_2 =([n],E_2)$ such that there is a word
$w$ which $u$-represents $G_1$ but there is no word $w'$ which $u$-represents
$G_2$. In fact, we will see the phenomenon in the case where
$u=12$. This is why we say an unlabelled graph is  $u$-representable
if it admits labeling $G=([n],E)$ such that there is word
$w$ which $u$-represents $G$.

This paper is organized as follows.  In Section~\ref{preliminaries}, we give all necessary definitions and show that any graph is $1^k$-representable if $k \geq 3$. Some basic properties of $12$-representable graphs are established in Section~\ref{12-matching-repr}. In Section~\ref{trees}, we shall prove that a tree $T$ is $12$-representable if and only if $T$ is a double caterpillar.
In Section~\ref{graph-classes}, we compare the class of $12$-representable graphs to other graph classes thus explaining Figure~\ref{overview}. In Section~\ref{conclusion}, we provide a discussion of $12$-representability of induced subgraphs of a grid graph.
 Finally, in Section~\ref{other-notions}, we  introduce a number of new ways to define representability of simple graphs, directed graphs, and hypergraphs via
words subject to certain pattern avoidance conditions.
We also define several analogues of {\em Wilf-equivalencies} in Section~\ref{other-notions}, which are yet to be studied.

\section{Preliminaries}\label{preliminaries}

A simple graph $G =(V,E)$ consists of a set of vertices
$V$ and a set of edges $E$ of the form $xy$ where
$x,y \in V$ and $x \neq y$.  All graphs considered in this paper are simple and finite.
Sometimes $V$  will be a finite subset of $\P$. In this case we say that the graph is {\em labeled}. For an unlabeled graph we define its {\em labeling} as an assignment to its vertices some elements of $\P$ (labels).


If $G = (V,E)$ is a labeled graph (i.~e. $V \subset \P$) and $|V| =n$, then
the {\em reduction} of $G$, denoted
$\red(G)$, is a relabeling $G' = (\{1, \ldots,n\},E')$ such that
the label on the $i$-th smallest vertex
of $V$ is replaced by $i$.


Let $G=(V,E)$ be a graph and $v\in V$. Then we
say that the  graph $G'=(V',E')$ is obtained by adding a {\em copy} $v'$ of the vertex $v$ if $V'=V\cup\{v'\}$ and $E'=E\cup E^*$, where $v'a\in E^*$ if and only if $va \in E$
 and $E^*$ does not contain edges not involving $v'$.
%
%
%
%
%
%
%
%
%
%
%
%

Given a word $w =w_1 \ldots w_k \in \P^*$ and $x,y \in A(w)$, we say
that $x$ and $y$ alternate in $w$ if $w_{\{x,y\}}$ is either
of the form $xyxyxy \ldots$ of even or odd length or
$yxyx \ldots $ of even or odd length. Let $G =(V,E)$ be a
labeled graph.
Then we say that
$G$ is {\em word-representable} if there exists a
word $w \in V^*$ such that for all $x,y \in V$,
$xy$ is an edge in $E$ if and only if $x$ and $y$ alternate
in $w$.   In such a situation, we say that $w$ {\em word-represents} $G$ and
$w$ is called a {\em word-representant} of $G$.

We say that $H =(V',E')$ is an {\em induced subgraph} of $G =(V,E)$ if
$V' \subseteq V$ and for all $x,y \in V'$, $xy \in E'$
if and only if $xy \in E$.  Then we have the following
observation establishing the hereditary nature of the notion of graph word-representability.

\begin{observation}\label{subgraph}
If $G =(V,E)$ is word-representable and
$H=(V',E')$ is an induced subgraph of $G$, then $H$ is word-representable.
\end{observation}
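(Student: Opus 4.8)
The plan is to show directly that a word-representant for $G$ restricts to a word-representant for any induced subgraph $H$. Suppose $w \in V^*$ word-represents $G = (V,E)$, and let $H = (V',E')$ be the induced subgraph on $V' \subseteq V$. The natural candidate for a word-representant of $H$ is $w_{V'}$, the subword of $w$ consisting of exactly the occurrences of letters in $V'$.

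First I would record the elementary but essential observation that for any $x,y \in V'$, the doubly-restricted word $(w_{V'})_{\{x,y\}}$ equals $w_{\{x,y\}}$. This is because deleting letters not in $V'$ and then deleting everything outside $\{x,y\}$ has the same effect as deleting everything outside $\{x,y\}$ in one step, since $\{x,y\} \subseteq V'$; formally, taking subwords is ``associative'' in the sense that $(w_B)_C = w_C$ whenever $C \subseteq B$. This step is routine and I would state it in a single sentence.

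Next I would combine this with the definition of word-representability. For $x,y \in V'$, we have $xy \in E'$ if and only if $xy \in E$ (since $H$ is an \emph{induced} subgraph), if and only if $x$ and $y$ alternate in $w$ (since $w$ word-represents $G$), if and only if $w_{\{x,y\}}$ has the alternating form. By the previous paragraph this is the same as saying $(w_{V'})_{\{x,y\}}$ has the alternating form, i.e.\ $x$ and $y$ alternate in $w_{V'}$. Hence $w_{V'}$ word-represents $H$, and $H$ is word-representable.

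There is no real obstacle here; the only point requiring any care is making sure the equivalence uses the ``induced'' hypothesis in the right place — namely that edges of $H$ are \emph{exactly} the edges of $G$ among vertices of $V'$, with no edges added or deleted — so that the alternation condition transfers in both directions. The argument is purely a manipulation of subword restrictions, and I would keep the write-up to a few lines.
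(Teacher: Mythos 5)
Your argument is correct and is exactly the paper's approach: the paper proves the observation by noting that if $w$ represents $G$, then $w_{V'}$ represents $H$, which is precisely your restriction argument spelled out in more detail.
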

Indeed, it is easy to see that
if $w$ represents $G=(V,E)$, then $w_{V'}$ represents
$H =(V',E')$.

In this paper, we introduce two generalizations
of the notion of a word-representable graph --- see Definitions~\ref{def-match-repr} and~\ref{exact-match-repr}. In Section~\ref{other-notions}, we will
discuss several other natural notions of representing
simple graphs, directed graphs, and hypergraphs by words
subject to certain pattern avoidance conditions.
The key to our main generalization is
to re-frame the notion of word-representable graphs in the language
of patterns in words. Note that $x$ and $y$ alternate in a
word $w \in \P^*$ if and only if $w_{\{x,y\}}$ has no $11$-match. Thus, a
graph $G =([n],E)$ is word-representable if
and only if there is a word $w \in [n]^*$ such that
for all $x,y \in [n]$, $xy$ is an edge in $E$ if and only
if $w_{\{x,y\}}$ has no $11$-match.  This leads us to our main
definition.

\begin{definition}\label{def-match-repr} Let $u = u_1 \ldots u_j$ be a word in $\{1,2\}^*$ such
that $\red(u) =u$.  Then we say that a labeled graph
$G =([n],E)$ is {\em $u$-representable}
if there is a word $w \in \P^*$ such that
for all $x,y \in [n]$, $xy  \in E$ if and only if
$w_{\{x,y\}}$ has no $u$-match. We say that an
 unlabeled graph $H$ is $u$-representable if there exits
a labeling of $H$, $H'=([n],E')$, such
that $H'$ is $u$-representable. In such a situation, we say that $H'$ {\em realizes the $u$-representability} of $H$.
\end{definition}
Thus, by Definition~\ref{def-match-repr}, $G$ is word-representable if and only if $G$ is
$11$-representable. Note that replacing ``word-representable graphs'' by ``$u$-representable graphs'' in Observation~\ref{subgraph}, we would obtain a true statement establishing the hereditary nature of $u$-representable graphs. The theory of word-representable graphs is rather involved, and thus the following theorem, where $1^k$ denotes $k$ 1s, came as a surprise to us.

\begin{theorem}\label{11111-matching} For every $k \geq 3$,
every finite graph $G$ is $1^k$-representable.
\end{theorem}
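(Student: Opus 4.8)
The plan is to construct, for each graph $G=([n],E)$, an explicit word $w$ that $1^k$-represents it by controlling, for every pair $x<y$, exactly how many times the letter picked out of $\{x,y\}$ repeats consecutively in $w_{\{x,y\}}$. The key point is that avoiding a $1^k$-match in $w_{\{x,y\}}$ means: the word $w_{\{x,y\}}$ never has $k$ equal letters in a row; it may have up to $k-1$ equal letters in a row. So the task reduces to arranging occurrences of each letter into consecutive blocks whose sizes we can tune independently for each pair. For a non-edge $xy$ we need $w_{\{x,y\}}$ to contain a run of $\geq k$ identical letters, while for an edge $xy$ all runs in $w_{\{x,y\}}$ must have length $\leq k-1$.

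First I would treat the case $k=3$ in detail, then remark that the same construction (padding blocks to size $k-1$) handles every $k\ge 3$. The natural building block is to start from the ``all distinct'' word $1\,2\,\cdots\,n$, in which every $w_{\{x,y\}}$ equals $xy$ and so has no repeated letter at all — hence this word $1^k$-represents the complete graph $K_n$ for every $k\ge 2$. To delete an edge $xy$ from the current graph without disturbing any other pair, I want to insert extra copies of one of the two letters so as to create a run of length $k$ in $w_{\{x,y\}}$ only. Here is the mechanism: process the non-edges of $G$ one at a time; to kill the pair $\{x,y\}$ with $x<y$, append to the end of the word a gadget of the form $y\,x\,x\,\cdots\,x$ (with $k-1$ copies of $x$) — more precisely I will append, for the $m$-th non-edge handled, a block consisting of one copy of each vertex in some fixed order, but with the vertex $x$ repeated $k-1$ times inside that block. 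Then in $w_{\{x,y\}}$ this contributes $\cdots y\, \underbrace{x\cdots x}_{k-1}\cdots$; combined with the single $x$ that already sat before $y$ in the initial segment $1\cdots n$, or by choosing the in-block order so the $x$'s are flanked appropriately, we get a run of exactly $k$ copies of $x$, producing a $1^k$-match. For any other pair $\{x',y'\}$, the appended block contributes at most two consecutive occurrences from the repeated $x$ (namely the run of $x$'s collapses to nothing in $w_{\{x',y'\}}$ unless $x\in\{x',y'\}$, and if say $x=x'$ but $y'\neq y$ we must check we have not accidentally created a run of $k$) — so the delicate bookkeeping is to make sure each gadget affects the intended pair and no other.

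The cleanest way to guarantee non-interference is to make the gadgets ``self-contained'' on each side: for the non-edge $\{x,y\}$ I would append the block $B_{x} = \pi \, x^{k-2}$ where $\pi = 1\,2\,\cdots\,n$ is the identity permutation word and $x^{k-2}$ means $k-2$ extra copies of $x$ placed immediately after the single $x$ inside $\pi$, i.e. $B_x = 1\,2\,\cdots\,(x{-}1)\, x\,x\cdots x\, (x{+}1)\cdots n$ with $x$ appearing $k-1$ times total. Concatenating the initial $\pi$ with one such block $B_{x}$ for every non-edge $\{x,y\}$ (note the block only depends on $x$, the smaller endpoint, which is fine — if $x$ is the smaller endpoint of several non-edges we may just use one block, or several identical ones), the resulting word $w$ has the property that between consecutive full permutation words no letter is repeated, and within each $B_x$ only the letter $x$ forms a long run. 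Then for a pair $\{a,b\}$ with $a<b$: in each permutation word $w_{\{a,b\}}$ picks up $ab$ (no repeats), and in a block $B_x$ it picks up $ab$ if $x\notin\{a,b\}$, $a^{k-1}b$ if $x=a$, and $ab^{?}$ — but $x=b$ cannot happen if we only ever repeat the smaller endpoint, so it picks up $a^{k-1}b$ exactly when $x=a$. Thus $w_{\{a,b\}}$ is a concatenation of copies of $ab$ and copies of $a^{k-1}b$, and it contains $k$ consecutive equal letters if and only if some $a^{k-1}b$ block appears, i.e. if and only if $a$ is the smaller endpoint of some handled non-edge, i.e. $\{a,b\}\notin E$. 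Hence $w$ is a $1^k$-representant of $G$.

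The main obstacle — and the only place requiring care — is precisely this last interference analysis: verifying that concatenating $a^{k-1}b$-type fragments with $ab$-type fragments never yields an \emph{unintended} run of length $k$ for some pair whose fragments are all meant to be ``short,'' and conversely that the junctions between consecutive blocks $\cdots n \mid 1 2 \cdots$ never glue two short runs into a long one (they cannot, since the letter $n$ at the end of one block and the letter $n$'s next occurrence are separated by $1,\dots,n{-}1$). Once this is nailed down for $k=3$, the general $k\ge 3$ case is identical with $k-1$ in place of $2$, since the construction was written uniformly in $k$. I would also double-check the degenerate cases ($G=K_n$, handled by $\pi$ alone; $G$ edgeless, handled by appending $B_x$ for every $x$) and note that $w$ is a shuffle of permutations of $[n]$ only when $k=3$, which is worth a remark given the question raised in the introduction about shuffles of permutations.
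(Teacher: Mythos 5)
There is a fatal flaw in your final ``clean'' construction, and it sits exactly at the step you flagged as the only delicate one. Take a non-edge $\{x,y\}$ with $x<y$ and look at $w_{\{x,y\}}$ in your word $w=\pi B_{x_1}B_{x_2}\cdots$. Every fragment, restricted to $\{x,y\}$, is one of $xy$, $x^{k-1}y$ (from $B_x$) or $xy^{k-1}$ (from $B_y$, which can occur: your claim that ``$x=b$ cannot happen'' is false, since the letter repeated in a block can perfectly well be the \emph{larger} element of some other pair). In particular every fragment begins with $x$'s and ends with $y$'s, so a run of $x$'s can never cross a fragment boundary forwards (the fragment ends in $y$) nor backwards (the previous fragment ends in $y$), and likewise for runs of $y$'s. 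Hence every run in $w_{\{x,y\}}$ has length at most $k-1$, there is no $1^k$-match, and the pair $\{x,y\}$ remains an edge. Your asserted biconditional (``$w_{\{a,b\}}$ contains $k$ equal consecutive letters if and only if some $a^{k-1}b$ block appears'') is simply wrong in the direction you need: the concatenation $ab\cdot a^{k-1}b\cdot ab\cdots$ never contains $a^k$. So your word $1^k$-represents $K_n$ for every input graph $G$, not $G$ itself. Bumping the block to $x^k$ copies does not save it either, because then $B_x$ restricted to any pair $\{a,x\}$ already contains $x^k$, which deletes \emph{all} edges at $x$, not just $xy$ --- this is precisely the interference you were worried about, and your gadget cannot thread the needle between $k-1$ (deletes nothing) and $k$ (deletes too much).

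The missing idea is that the gadget for the pair $\{x,y\}$ must create a run of length $k$ that is visible \emph{only} to that pair, and the way to do this is to interpose, between $k-1$ copies of $x$ and one further copy of $x$, a string of letters that omits $y$ but contains every other vertex. This is what the paper's proof does: edges are removed one at a time, and to delete $ij$ from a graph represented by $w$ one passes to $w'=i^{k-1}\pi\, i\, p(w)\, w$, where $\pi$ is a permutation of $[n]\setminus\{i,j\}$ and $p(w)$ is the initial permutation of $w$. Since $j$ does not occur in $\pi$, the restriction $w'_{\{i,j\}}$ starts $i^{k-1}\cdot i=i^k$ and the edge $ij$ is destroyed, while for any other pair $\{i,s\}$ the letter $s$ in $\pi$ breaks the run of $i$'s at length $k-1$; the prefix $p(w)$ and the fresh start-of-word occurrences then guarantee (and this still needs the case check the paper carries out) that no other pair accidentally acquires a run of length $k$. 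Your overall strategy --- start from $K_n$ represented by $12\cdots n$ and kill non-edges by appended gadgets --- is the same as the paper's in spirit, but without an analogue of the ``$\pi$ omitting $y$'' device the argument does not go through as written.
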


\begin{proof} Fix any $k\geq 3$. Clearly, if $G$ is the complete graph $K_n$ on vertex set $[n]$, then $G$ is $1^k$-representable by any permutation of $[n]$, in particular,  $w = 12\ldots n$ $1^k$-represents
$K_n$.

We proceed by induction on the number of edges in a graph with the base case being the complete graph. Our goal is to show that if $G$ is $1^k$-representable, then the graph $G'$ obtained from $G$ by removing any edge
$ij$ is also $1^k$-representable.

Suppose that $w$ $1^k$-represents $G=([n],E)$ and let $p(w)$ denote the {\em initial permutation} of $w$. That is, $p(w)$ is obtained from $w$ by removing all but the leftmost occurrence of each letter.
For example, 
$p(31443266275887)=31426758$.
Further, let $\pi$ be any permutation of $[n]\backslash\{i,j\}$.
Then we claim that the word
$$w'=i^{k-1}\pi ip(w)w$$
$1^k$-represents $G'$. Indeed, the vertices $i$ and $j$ are not connected any more because $w'_{\{i,j\}}$ contains $i^k$. Also, no new edge can be created because of the presence of $w$ as a subword. Thus, we only need to show that each edge $ms$ represented by $w$ is still represented by $w'$ if $s\neq j$ or $m\neq i$.

If $s \neq j$ and $m=i$, then either
\begin{enumerate}
\item $w_{\{s,i\}} = s^t i^d \ldots $ where $1 \leq d,t\leq k-1$  in which
case $w'_{\{s,i\}} = i^{k-1}sisis^t i^d \ldots$, or
\item $w_{\{s,i\}} = i^d s^t  \ldots $
where $1 \leq d,t \leq k-1$ in which
case $w'_{\{s,i\}} = i^{k-1}siisi^d s^t  \ldots $.
\end{enumerate}
In each case, it is easy to see that $w_{\{s,i\}}$ has no $1^k$-match so
that $w'$ $1^k$-represents the edge $is$.

If $m \neq i$ and $s=j$, then either
\begin{enumerate}
\item $w_{\{m,j\}} = m^t j^d \ldots $ where $1 \leq d,t\leq k-1$  in which
case $w'_{\{m,j\}} = mmjm^t j^d \ldots$, or
\item $w_{\{m,j\}} = j^d m^t  \ldots $
where $1 \leq d,t \leq k-1$ in which
case $w'_{\{m,j\}} = mjmj^d m^t  \ldots $.
\end{enumerate}
In each case, it is easy to see that $w_{\{m,j\}}$ has no $1^k$-match so
that $w'$ $1^k$-represents the edge $mj$.

Finally suppose  $m,s \not \in \{i,j\}$ and $m$ occurs before
$s$ in $\pi$. .
Then either
\begin{enumerate}
\item $w_{\{m,s\}} = m^t s^d \ldots $ where $1 \leq d,t\leq k-1$  in which
case $w'_{\{m,j\}} = msmsm^t s^d \ldots$, or
\item $w_{\{m,j\}} = s^d m^t  \ldots $
where $1 \leq d,t \leq k-1$ in which
case $w'_{\{m,j\}} = mssms^d m^t  \ldots $.
\end{enumerate}
In each case, it is easy to see that $w_{\{m,s\}}$ has no $1^k$-match so
that $w'$ $1^k$represents the edge $ms$.
\end{proof}

We note that there are some natural symmetries
among $u$-representable graphs. That
is, suppose that $u =u_1 \ldots u_j \in \P^*$ and
$\red(u) =u$.  Let the {\em reverse} of
$u$ be the word $u^r = u_j u_{j-1} \ldots u_1$.
Then for any word
$w \in \P^*$, it is easy to see that
$w$ has a $u$-match if and only if $w^r$ has a $u^r$-match.
This justifies the following
observation.

\begin{observation}\label{lem:reverse-u}
Let $G =(V,E)$ be a graph and  $u\in\P^*$ be such that $\red(u) =u$. Then
$G$ is $u$-representable if and only if $G$ is
$u^r$-representable.
\end{observation}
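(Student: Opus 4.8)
The plan is to reduce the claim to a single direction using the biconditional symmetry, and then to exploit the fact, noted immediately above the statement, that a word $w \in \P^*$ has a $u$-match if and only if $w^r$ has a $u^r$-match. Observe first that if $\red(u) = u$ then also $\red(u^r) = u^r$, since reversing a word changes neither the set of letters appearing in it nor the requirement that this set be an initial segment $\{1, \ldots, k\}$; and of course $(u^r)^r = u$. Hence it suffices to prove just one implication: if $G$ is $u$-representable, then $G$ is $u^r$-representable, and the converse is obtained by swapping the roles of $u$ and $u^r$.

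So suppose $G$ is $u$-representable. Fix a labeling $G = ([n], E)$ and a word $w \in \P^*$ such that for all $x, y \in [n]$ we have $xy \in E$ if and only if $w_{\{x,y\}}$ has no $u$-match. I claim that $w^r$ $u^r$-represents this same labeled graph. The one computation to carry out is that passing to the subword on a pair of letters commutes with reversal, i.e. $(w^r)_{\{x,y\}} = \big(w_{\{x,y\}}\big)^r$: reversing $w$ reverses the order of all positions while leaving the letter at each position unchanged, so deleting every letter other than $x$ and $y$ from $w^r$ yields exactly the reverse of the word obtained by the same deletion from $w$. Applying the reverse-match observation to the word $w_{\{x,y\}}$ then gives that $w_{\{x,y\}}$ has no $u$-match if and only if $(w^r)_{\{x,y\}} = \big(w_{\{x,y\}}\big)^r$ has no $u^r$-match. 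Combining the two, $xy \in E$ if and only if $(w^r)_{\{x,y\}}$ has no $u^r$-match, which is precisely the statement that $w^r$ $u^r$-represents $G = ([n], E)$. Therefore $G$ is $u^r$-representable.

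Since the converse is the same argument with $u$ and $u^r$ interchanged, this completes the proof. There is essentially no obstacle: the only step needing care is the commutation of subword extraction with reversal, which has been isolated above, and once that is in hand everything follows formally from the reverse-match observation.
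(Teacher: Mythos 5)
Your proof is correct and follows essentially the same route as the paper, which justifies the observation by the single remark that $w$ has a $u$-match if and only if $w^r$ has a $u^r$-match and leaves the rest implicit. Your write-up merely makes explicit the two details the paper glosses over (that $\red(u^r)=u^r$ and that $(w^r)_{\{x,y\}}=\big(w_{\{x,y\}}\big)^r$), both of which are handled correctly.
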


For any word
$w =w_1 \ldots w_k \in \P^*$ whose largest letter is $n$,
we let $w^c = (n+1-w_1) \ldots (n+1-w_k)$.
It is easy to see that
$w$ has a $u$-match if and only if $w^c$ has a $u^c$-match.
Given a graph $G =([n],E)$, we let
the {\em supplement} of $G$ be defined by
$\overline{G} =([n],\overline{E})$ where for all $x,y \in [n]$,
$xy \in E$ if and
only if $n+1-x$ and $n+1 -y$ are adjacent in $\overline{G}$.  One can think of
the supplement of the graph $G =(V,E)$ as a relabeling where
one replaces each label $x$ by the label
$n+1-x$.

It is easy to see that if $w$ witnesses that
$G=([n],E)$ is $u$-representable,
then $w^c$ witnesses that $\overline{G}$ is $u^c$-representable.
This justifies the following
observation.

\begin{observation}\label{lem:complement-u}
Let $G =([n],E)$ be a graph, and
$u$ be a word in $[n]^*$ such
that $\red(u) =u$. Then
$G$ is $u$-representable if and only if $\overline{G}$ is
$u^c$-representable.
\end{observation}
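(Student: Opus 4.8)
The plan is to establish directly the implication sketched just before the statement and then obtain the equivalence from it. Concretely, I will show that for every triple $(G,u,w)$, if the word $w$ $u$-represents $G=([n],E)$ then the complement $w^c$ $u^c$-represents the supplement $\overline G=([n],\overline E)$, where $\overline E$ is exactly the edge set defined in the excerpt by the rule $xy\in E\iff (n+1-x)(n+1-y)\in\overline E$. As is standard for representants, I take the alphabet of $w$ to be all of $[n]$, so that the largest letter of $w$ equals $n$ and the complement $w^c$ of the excerpt, formed relative to this largest letter, realizes precisely the relabeling $x\mapsto n+1-x$. Granting this one implication, the converse of the observation is automatic: applying it to the triple $(\overline G,u^c,w')$ gives that a $u^c$-representant $w'$ of $\overline G$ yields a $(u^c)^c$-representant $(w')^c$ of $\overline{\overline G}$, and since $\overline{\overline G}=G$ and $(u^c)^c=u$ --- both immediate from the definitions --- this is a $u$-representant of $G$.

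The substance is a single identity. Fix $x,y\in[n]$ with $x<y$ and put $a=n+1-x$, $b=n+1-y$, so that $a>b$ and, by definition of the supplement, $ab\in\overline E$ iff $xy\in E$. A position of $w^c$ survives restriction to $\{a,b\}$ exactly when $n+1-w_i\in\{a,b\}$, i.e.\ when $w_i\in\{x,y\}$; these are the same positions that survive the restriction of $w$ to $\{x,y\}$. Hence $(w^c)_{\{a,b\}}$ is obtained from $w_{\{x,y\}}$ by the letterwise substitution $x\mapsto a$, $y\mapsto b$. Because $x<y$ forces $a>b$, reducing $(w^c)_{\{a,b\}}$ sends $x\mapsto 2$ and $y\mapsto 1$, while reducing $w_{\{x,y\}}$ sends $x\mapsto 1$ and $y\mapsto 2$. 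Thus
\[
\red\bigl((w^c)_{\{a,b\}}\bigr)=\bigl(\red(w_{\{x,y\}})\bigr)^c,
\]
the binary complement (the $1\leftrightarrow 2$ swap) of $\red(w_{\{x,y\}})$; one checks that this identity persists even when $w_{\{x,y\}}$ uses only one of the two letters.

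With this identity in hand I would assemble the proof from the two facts already recorded in the excerpt: a $u$-match is unaffected by $\red$, and a word $v$ has a $u$-match iff $v^c$ has a $u^c$-match. Applying the latter to $v=\red(w_{\{x,y\}})$ and rewriting via the displayed identity yields that $w_{\{x,y\}}$ has a $u$-match iff $(w^c)_{\{a,b\}}$ has a $u^c$-match. Negating and combining with the hypothesis $xy\in E\iff w_{\{x,y\}}$ has no $u$-match and the supplement relation $ab\in\overline E\iff xy\in E$ gives $ab\in\overline E\iff (w^c)_{\{a,b\}}$ has no $u^c$-match. Since $x,y$ (equivalently $a,b$) were arbitrary, this is exactly the statement that $w^c$ $u^c$-represents $\overline G$, completing the implication and hence the observation.

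The one step I expect to require genuine care is the displayed identity: one must confirm that the global complementation $w\mapsto w^c$, which is defined with respect to the overall largest letter $n$, restricts on each pair $\{x,y\}$ to the ordinary binary complementation on $\red(w_{\{x,y\}})$. The mechanism that makes this true is the order reversal $x<y\Rightarrow n+1-x>n+1-y$: it interchanges the smaller/larger roles of the two letters and so turns the restriction into the $1\leftrightarrow 2$ swap needed to invoke the $u$-versus-$u^c$ lemma. Once this is pinned down, the remainder is a direct substitution into the definitions of $u$-representability and of the supplement.
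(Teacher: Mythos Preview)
Your proposal is correct and follows exactly the approach the paper sketches: the paper simply records, immediately before the observation, that ``if $w$ witnesses that $G=([n],E)$ is $u$-representable, then $w^c$ witnesses that $\overline{G}$ is $u^c$-representable,'' and you have carefully unpacked why this holds via the identity $\red\bigl((w^c)_{\{a,b\}}\bigr)=\bigl(\red(w_{\{x,y\}})\bigr)^c$ and then closed the equivalence using $(u^c)^c=u$ and $\overline{\overline G}=G$. Your explicit note that one should take the alphabet of $w$ to be all of $[n]$ so that the complement is computed relative to $n$ is a worthwhile clarification the paper leaves implicit.
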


We can combine Observations 2 and 3 to prove
the following fact about 12-representable graphs. Suppose
that $w$ 12-represents $G$. Then $w^r$ 21-represents $G$ and, hence,
$(w^r)^c$ 12-represents $\overline{G}$.
It follows that if a vertex $v$ has label $1$ (resp., $n$) in some labeling realizing the $12$-representability of an unlabeled graph $G$,
then there is another labeling realizing the
$12$-representability of $G$ such that the vertex $v$ has label $n$ (resp., $1$).





\section{12-representable graphs}\label{12-matching-repr}

In this section we begin the study of $12$-representable
graphs.




Our first topic of study is the length of a word $w$ than
can 12-represent a graph.
Recall that  $G=([n],E)$ is a {\em permutation graph} if and only if
there is a permutation $\sg$ of $[n]$ such that for all
$1 \leq i < j \leq n$, $ij$ is in $E$ if and only
if $j$ occurs before $i$ in $\sg$.  However, this means
that $\sg$ $12$-represents $G$. Thus we have the
following simple fact.

\begin{proposition}\label{perm-graphs-versus-12-match} A graph $G$ can be
$12$-represented by a permutation if and only if $G$ is a permutation graph. \end{proposition}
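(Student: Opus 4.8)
The plan is to prove the two directions of the biconditional separately, with both being essentially immediate from unwinding the definitions.

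First I would handle the direction ``$G$ is a permutation graph $\Rightarrow$ $G$ can be $12$-represented by a permutation.'' Suppose $G = ([n], E)$ is a permutation graph, so by definition there is a permutation $\sigma$ of $[n]$ such that for all $1 \leq i < j \leq n$, we have $ij \in E$ if and only if $j$ occurs before $i$ in $\sigma$. I claim $\sigma$, viewed as a word in $[n]^*$, $12$-represents $G$. To see this, fix $x < y$ in $[n]$. The subword $\sigma_{\{x,y\}}$ is a permutation of the two-letter set $\{x,y\}$, hence it is either $xy$ or $yx$. It has a $12$-match precisely when it equals $xy$, i.e. when $x$ (the smaller letter) occurs before $y$ (the larger letter); it has no $12$-match precisely when it equals $yx$, i.e. when $y$ occurs before $x$. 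By the defining property of $\sigma$, the latter happens exactly when $xy \in E$. Thus $xy \in E$ iff $\sigma_{\{x,y\}}$ has no $12$-match, which is exactly what it means for $\sigma$ to $12$-represent $G$.

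Conversely, for ``$G$ can be $12$-represented by a permutation $\Rightarrow$ $G$ is a permutation graph,'' suppose $\sigma$ is a permutation of $[n]$ that $12$-represents $G = ([n], E)$. Then for every $x < y$, $\sigma_{\{x,y\}} \in \{xy, yx\}$, and $xy \in E$ iff $\sigma_{\{x,y\}}$ has no $12$-match iff $\sigma_{\{x,y\}} = yx$ iff $y$ occurs before $x$ in $\sigma$. This is exactly the defining condition for $G$ to be a permutation graph witnessed by the permutation $\sigma$, so $G$ is a permutation graph.

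There is really no main obstacle here; the content of the proposition is the observation that the pattern-avoidance condition defining $12$-representability, when the representing word is required to be a permutation (so that every two-letter subword has no repeated letter and is simply an ordered pair), coincides on the nose with the standard order-reversal condition defining permutation graphs. The only point requiring any care is to note that a permutation of $[n]$ has each letter occurring exactly once, so $\sigma_{\{x,y\}}$ always has length two and the dichotomy $xy$ versus $yx$ is exhaustive; once that is observed, the equivalence ``no $12$-match $\Leftrightarrow$ larger letter precedes smaller letter'' is automatic and both implications follow. It is worth remarking that this is precisely what makes $12$-representable graphs a natural generalization of permutation graphs: dropping the requirement that $w$ be a permutation, and merely asking for an arbitrary word, enlarges the class.
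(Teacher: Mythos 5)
Your proof is correct and is essentially the paper's argument: the paper simply observes that the defining condition of a permutation graph (for $i<j$, $ij\in E$ iff $j$ precedes $i$ in $\sigma$) is verbatim the condition that $\sigma$ $12$-represents $G$, which is exactly the equivalence you spell out in both directions. No substantive difference in approach.
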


It follows that all graphs on at most four vertices are $12$-representable (since $C_5$ is the minimum graph that is not a permutation graph). We will study the place of 12-representable graphs among the other graph classes in Section~\ref{graph-classes}.

Next we show that any 12-representable graph can be 12-represented by a word having at most two copies of each letter.

\begin{theorem}\label{thm-at-most-two} Let $G=(V,E)$ be a labeled representable graph.
Then there exists a word-representant $w$ in which each letter occurs at most twice. \end{theorem}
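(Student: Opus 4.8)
The plan is to start with an arbitrary word $w$ that $12$-represents $G=(V,E)$ and repeatedly ``trim'' it until every letter occurs at most twice, checking that trimming never changes which pairs $xy$ have a $12$-match in $w_{\{x,y\}}$. The key observation I would isolate first is a structural description of when $w_{\{x,y\}}$ has a $12$-match: since $w_{\{x,y\}}$ uses only the two letters $x<y$ (say), it has a $12$-match if and only if some occurrence of $x$ appears somewhere to the left of some occurrence of $y$; equivalently, $w_{\{x,y\}}$ has \emph{no} $12$-match iff every occurrence of $y$ precedes every occurrence of $x$, i.e.\ $w_{\{x,y\}}$ looks like $y^a x^b$ with $a,b\ge 1$. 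Thus $xy\in E$ iff in $w$ all copies of $\max(x,y)$ come before all copies of $\min(x,y)$.

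With that reformulation in hand, the natural move is to keep, for each letter $x$, only its \emph{first} and \emph{last} occurrence in $w$, deleting all occurrences strictly in between; call the resulting word $w'$. I would then verify that for every pair $x<y$, the relative order information that matters — ``does the last $y$ precede the first $x$?'' — is unchanged, because the first and last occurrences of each letter are exactly the ones retained. Concretely, $w_{\{x,y\}}$ has no $12$-match iff (last occurrence of $y$) $<$ (first occurrence of $x$) in $w$; both of these positions survive into $w'$, and no retained occurrence of $x$ or $y$ lies outside the interval they bracket, so $w'_{\{x,y\}}$ is again of the form $y^{a'}x^{b'}$ precisely when $w_{\{x,y\}}$ was, and has a $12$-match precisely when $w_{\{x,y\}}$ did. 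Hence $w'$ also $12$-represents $G$, and each letter now occurs at most twice (exactly twice if it occurred at least twice in $w$, once if it was a singleton in $w$).

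I expect the main thing to get right — rather than a deep obstacle — is the bookkeeping when a letter occurs exactly once in $w$: then its ``first'' and ``last'' occurrences coincide and we keep a single copy, which is fine since the statement only asks for \emph{at most} twice. One should also double-check the degenerate reading of the $12$-pattern: a $12$-match needs two \emph{distinct} positions with a strictly smaller letter strictly to the left of a strictly larger letter, so a single occurrence of a letter contributes no $12$-match, consistent with the claim that the complete graph is $12$-represented by a permutation (Proposition~\ref{perm-graphs-versus-12-match}). Finally, I would remark that the deletion can be performed in one pass (it is not an iterative process that needs a termination argument): the word $w'$ defined by ``retain first and last occurrence of each letter'' is produced directly from $w$, and the verification above is a single case analysis over pairs, completing the proof.
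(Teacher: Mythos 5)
Your proposal is correct and is essentially the paper's own argument: both rest on the observation that for the pattern $12$, whether $w_{\{x,y\}}$ has a match depends only on whether the last occurrence of the larger letter precedes the first occurrence of the smaller one, so all ``middle'' occurrences of each letter can be deleted. The only cosmetic difference is that you delete them for all letters in one pass, whereas the paper removes them one letter at a time.
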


\begin{proof} Let $w'$ represent $G$ and suppose that a letter $j$ occurs in $w'$ more than twice. Then let $w'=AjBjC$, where $A$ and $C$ do not contain any copies of $j$. Note that $ij\in E$ if and only if ($i<j$ and all copies of $i$ are in $C$) or ($i>j$  and all copies of $i$ are in $A$). So, any copies of the letter $j$ in $B$ do not affect on the neighborhood of the vertex $j$ in $G$ and therefore they can be omitted. Doing the same with all other letters occurring in $w'$ more than twice, one obtain a required word $w$ representing $G$.
 \end{proof}

Note that replacing ``at most'' by ``exactly'' in the statement of Theorem~\ref{thm-at-most-two}, we obtain a true statement. This is based on the fact that replacing a letter $x$ in a word $12$-representing a graph by any number of copies of $x$, we obtain a word $12$-representing the same graph.





\begin{lemma}\label{copies-lemma} Let $G=(V,E)$ be a
$12$-representable graph and $v\in V$. Then the graph $H$ obtained by adding to $G$ a copy of $v$ is also
$12$-representable. \end{lemma}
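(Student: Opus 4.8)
The plan is to take a word $w$ that $12$-represents $G=(V,E)$, which by Theorem~\ref{thm-at-most-two} may be assumed to have each letter occurring at most twice, and to describe explicitly where to insert the two copies of the new vertex $v'$ so that $v'$ ends up with exactly the same neighbourhood as $v$ (plus the non-edge $vv'$, since a copy is not adjacent to the vertex it copies in the definition of ``adding a copy'' given in Section~\ref{preliminaries} — I should double-check whether the intended $H$ has $vv'$ as an edge or not, and adjust accordingly; the construction below handles the non-edge case and can be tweaked for the edge case by changing the relative order of the copies of $v$ and $v'$).

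First I would recall the structural criterion extracted in the proof of Theorem~\ref{thm-at-most-two}: if $w$ $12$-represents $G$ and a vertex $j$ occurs (say) once, at position determining two ``sides'', then for any other vertex $i$ we have $ij\in E$ iff ($i<j$ and every copy of $i$ lies to the right of $j$) or ($i>j$ and every copy of $i$ lies to the left of $j$). More generally, with two copies $w=A\,v\,B\,v\,C$, the edge $vi$ is present iff ($i<v$ and all copies of $i$ lie in $C$) or ($i>v$ and all copies of $i$ lie in $A$) — the middle block $B$ is irrelevant. The key step is then: choose the \emph{label} of $v'$ to be, say, $v+\tfrac12$ and then relabel/reduce at the end (or, equivalently, insert $v'$ as a brand-new largest-type label and argue by the reduction operation $\red(\cdot)$ defined earlier), and place $v'$ so that its two copies straddle $w$ exactly the way the copies of $v$ do relative to the letters smaller/larger than $v$. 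Concretely, if $v$ is incident in $G$ to the ``down-set'' $D=\{i<v: vi\in E\}$ and ``up-set'' $U=\{i>v: vi\in E\}$, then I would put one copy of $v'$ immediately after the block of all letters of $U$ that appear first, and the other copy immediately before the block of letters of $D$; because $v'$ is given a label adjacent to $v$'s, for every $i\neq v$ the condition ``$i$ lies entirely on the correct side of the $v'$-copies'' is forced to coincide with ``$i$ lies entirely on the correct side of the $v$-copies'', which is exactly membership in $D\cup U$.

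The verification then splits into routine cases: (i) the pair $\{v',i\}$ for $i\neq v$ — show $v'i\in E(H)\iff vi\in E(G)$, which follows from the placement since the copies of $i$ were already all-on-one-side (or split) relative to $v$, and the $v'$-copies were inserted on the same side; (ii) the pair $\{v,v'\}$ — show they are non-adjacent (resp. adjacent) because $w'_{\{v,v'\}}$ contains the forbidden $12$-pattern $v\,v'$ in the appropriate order (here is where I must be careful with the exact label assigned to $v'$, since $12$-matches depend on relative order of labels, not on which physical vertex is the ``copy''); (iii) all pairs $\{i,j\}$ with $i,j\neq v'$ — these are unaffected since $w$ is a subword of $w'$ in the right order, provided the inserted $v'$-copies do not create a new $12$- or destroy an old one among old letters, which they cannot because $v'$ is a new symbol never appearing in those subwords $w'_{\{i,j\}}=w_{\{i,j\}}$.

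The main obstacle I anticipate is bookkeeping the order conventions: the $12$-representation is sensitive to the numerical labels, and ``adding a copy'' is a statement about unlabelled-up-to-isomorphism graphs, so I need to be disciplined about first fixing a convenient label for $v'$ (most naturally, either $v-\epsilon$ or $v+\epsilon$, i.e. inserting a new value immediately adjacent to $v$ and then applying $\red$), then checking that with this choice the ``all copies on the correct side'' description of $v$'s neighbourhood transfers verbatim to $v'$. A secondary subtlety is the case where $v$ itself occurs only once in $w$ (so $B$ is empty or $A$/$C$ degenerate) and the boundary cases where $U$ or $D$ is empty, which should be handled uniformly by allowing the inserted copies of $v'$ to sit at the extreme left/right of $w$. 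Once the label is pinned down, each of the three case-checks above is a one-line pattern inspection of the type already carried out in Theorems~\ref{11111-matching} and~\ref{thm-at-most-two}.
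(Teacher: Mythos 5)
Your construction is correct, but it takes a different (and somewhat heavier) route than the paper. The paper's proof is a one-step local substitution: relabel so that the copy gets label $i+1$ (shift every label $j>i$ up by one), and in the representing word replace every occurrence of $i$ by $i(i+1)$. Because no label lies strictly between $i$ and $i+1$, for every other letter $a$ the subword $w'_{\{i+1,a\}}$ is order-isomorphic to $w'_{\{i,a\}}$, so the copy inherits exactly the neighbourhood of $v$; and $w'_{\{i,i+1\}}=i(i+1)i(i+1)\cdots$ contains a $12$-match, so $v$ and $v'$ are non-adjacent --- which is indeed what the definition of ``adding a copy'' in Section~\ref{preliminaries} requires, resolving the point you left open. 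Your version instead invokes Theorem~\ref{thm-at-most-two}, extracts the ``all copies on the correct side'' criterion, and inserts only two copies of $v'$ at neighbourhood-determined positions; this works (since all occurrences of letters of $U$ precede the first $v$ and all occurrences of letters of $D$ follow the last $v$, your placement forces the same side conditions for $v'$ as for $v$, and the subword on $\{v,v'\}$ ends with the pattern $v\,v'$, a $12$-match), but it buys nothing over the substitution argument and costs you the boundary cases ($U$ or $D$ empty) and some imprecision: the letters of $U$ (or $D$) need not form a contiguous ``block'', so you should say ``after the last occurrence of any letter of $U$'' and ``before the first occurrence of any letter of $D$'' --- or, simplest of all, insert the copies of $v'$ immediately adjacent to occurrences of $v$, which is exactly the paper's move and makes the verification immediate without Theorem~\ref{thm-at-most-two} at all.
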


\begin{proof} 
Let $i$ be the label of $v$. First, increase by 1 all labels $j>i$,
 keeping all other labels the same. Add a copy of $v$ and label it $i+1$ to obtain a labeling of $H$.
 Now, in a word $w$
 $12$-representing $G$ replace each letter $j>i$ by $j+1$ and substitute each occurrence of $i$ by $i(i+1)$ to obtain a word $w'$. Clearly, $ia$ is an edge in $H$ if and only if $(i+1)a$ is an edge in $H$, and $i$ and $i+1$ are not adjacent in $H$. All edges not involving $v$ and its copy are the same in $G$ and $H$. Thus, $w'$ $12$-represents $H$.\end{proof}



Next, we shall consider labeled graphs
$I_3$, $J_4$ and $Q_4$ presented in Figure \ref{fig:3graphs}.
These graphs will play a key role in determining which
graphs are
$12$-representable.

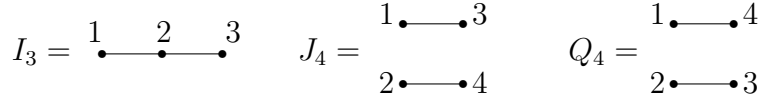
\begin{figure}[h]
\begin{center}

\setlength{\unitlength}{4mm}

\begin{picture}(20,3)

\put(0,0){

\put(-3,0.8){$I_3=$}

\put(-0.5,1.4){1} \put(1.8,1.4){2} \put(4.1,1.4){3}

\put(0,1){\p} \put(2,1){\p}\put(4,1){\p}

\put(0,1){\line(1,0){4}}
}

\put(10,0){

\put(-3.5,0.8){$J_4=$}

\put(-0.8,2){1} \put(2.3,2){3} \put(2.3,-0.3){4} \put(-0.8,-0.3){2}

\put(0,0){\p} \put(0,2){\p} \put(2,0){\p} \put(2,2){\p}

\put(0,0){\line(1,0){2}}
\put(0,2){\line(1,0){2}}
}

\put(19,0){

\put(-3.5,0.8){$Q_4=$}

\put(-0.8,2){1} \put(2.3,2){4} \put(2.3,-0.3){3} \put(-0.8,-0.3){2}

\put(0,0){\p} \put(0,2){\p} \put(2,0){\p} \put(2,2){\p}

\put(0,0){\line(1,0){2}}
\put(0,2){\line(1,0){2}}
}

\end{picture}
\caption{The graphs $I_3$, $J_4$, and $Q_4$.}\label{fig:3graphs}
\end{center}
\end{figure}

\begin{lemma}\label{thm:3graphs}
Let $G=(V,E)$ be a labeled graph.
Then if $G$ has an induced subgraph $H$ such that
$\red(H)$ is equal to one of $I_3$, $J_4$, or $Q_4$, then
$G$ is not $12$-representable.
\end{lemma}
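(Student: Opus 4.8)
The plan is to show that each of the three labeled graphs $I_3$, $J_4$, $Q_4$ is itself not $12$-representable, and then invoke the hereditary property of $u$-representability (the analogue of Observation~\ref{subgraph} noted just after Definition~\ref{def-match-repr}): since $\red(H)$ is not $12$-representable and $H$ is an induced subgraph of $G$, if $G$ were $12$-representable then $\red(H)\cong H$ would be too, a contradiction. Note that reduction does not change $12$-representability since the defining condition only refers to the relative order of labels. So the whole task reduces to three small case analyses, each assuming a hypothetical word-representant $w$ and deriving a contradiction from the local structure. The key combinatorial fact to keep in mind throughout (it is exactly the observation underlying Theorems~\ref{thm-at-most-two} and~\ref{copies-lemma}) is: for $i<j$, the pair $w_{\{i,j\}}$ has \emph{no} $12$-match if and only if $w_{\{i,j\}}$ has the form $j^a i^b$ with $a,b\ge 1$, i.e. every occurrence of the larger label precedes every occurrence of the smaller label; equivalently, $ij\notin E$ iff some copy of $i$ precedes some copy of $j$.

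For $I_3$ (path $1-2-3$, with $13\notin E$): since $12\in E$, no copy of $1$ precedes any copy of $2$, so the leftmost occurrence of $2$ precedes the leftmost occurrence of $1$; similarly since $23\in E$, the leftmost occurrence of $3$ precedes the leftmost occurrence of $2$. Hence in $w$ the first $3$ precedes the first $2$ which precedes the first $1$; in particular some occurrence of $3$ precedes some occurrence of $1$, which by the criterion forces $13\notin E$ to fail — wait, that is consistent; the contradiction instead comes from looking at last occurrences. From $12\in E$: no $1$ precedes any $2$, so the last $1$ comes after the last $2$... I would instead argue directly: $12\in E$ means all copies of $2$ come before all copies of $1$ (form $2^a1^b$); $23\in E$ means all copies of $3$ come before all copies of $2$; so all $3$'s precede all $1$'s, i.e. $w_{\{1,3\}}=3^c1^d$, which means $13\notin E$ should hold — but that's what we want. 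So $I_3$ needs the opposite configuration: I must recheck orientation. Since $G$ can be relabeled (Observation~\ref{lem:complement-u} and the reverse symmetry discussed), the genuinely forbidden configuration is the one where the relabeling creates a cyclic obstruction; I would carefully track which of $ij\in E$ forces $i$ before $j$ versus $j$ before $i$, using: $ij\in E$ ($i<j$) $\iff$ form $j\cdots j\,i\cdots i$. Then $I_3=1{-}2{-}3$: $12\in E\Rightarrow$ all $2$'s before all $1$'s; $23\in E\Rightarrow$ all $3$'s before all $2$'s; transitively all $3$'s before all $1$'s $\Rightarrow 13\notin E$. This is actually \emph{satisfiable}, so the obstruction must be that $I_3$ as drawn is NOT the path — rechecking the figure, $I_3$ is indeed the path on $\{1,2,3\}$ with edge $12$ and edge $23$; so the claim is that this path is not $12$-representable. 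The resolution: I have the criterion backwards. $ij\in E$ should mean $w_{\{i,j\}}$ \emph{has} a $12$-match? No — edges correspond to \emph{no} $u$-match. Let me fix this in the writeup by rederiving from Theorem~\ref{thm-at-most-two}'s proof, which states $ij\in E\iff$ ($i<j$ and all copies of $i$ are in $C$) or ($i>j$ and all copies of $i$ are in $A$) where $w=AjBjC$ — i.e. $ij\in E$ ($i<j$) iff every copy of $i$ comes after every copy of $j$. With this correct criterion: $I_3$: $12\in E\Rightarrow$ all $1$'s after all $2$'s; $23\in E\Rightarrow$ all $2$'s after all $3$'s; so all $1$'s after all $3$'s $\Rightarrow$ every copy of $3$ precedes every copy of $1\Rightarrow 13\in E$ (by the criterion applied to pair $1<3$: $13\in E$ iff all $1$'s after all $3$'s). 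Contradiction with $13\notin E$.

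For $J_4$ (vertices $1,2,3,4$; edges $12$ and $34$; all other pairs non-edges): I would apply the criterion to the non-edges $13,14,23,24$ to get that some copy of the smaller label precedes some copy of the larger in each, combine with the edge constraints $12\in E\Rightarrow$ all $1$'s after all $2$'s and $34\in E\Rightarrow$ all $3$'s after all $4$'s, and chase the resulting inequalities on first/last occurrences to a cyclic contradiction; with at most two copies per letter (Theorem~\ref{thm-at-most-two}) there are only finitely many interleavings and the contradiction is forced. The graph $Q_4$ (edges $12$, $34$ but with the "diagonal" labeling $1,2$ on one edge and $4,3$ on the other — so non-edges include $13,14,23,24$ but the labels are arranged so the would-be order is cyclic) is handled the same way; the difference from $J_4$ is exactly which label pairs are edges versus which ordering the labels impose, and one of $J_4,Q_4$ is essentially the "$2143$-type" and the other the "$2413$-type" obstruction to being a permutation/$12$-representable graph. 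The main obstacle I anticipate is purely bookkeeping: getting the direction of the order-forcing criterion right (edge $\iff$ no $12$-match $\iff$ larger label entirely to the left) and then checking that for $J_4$ and $Q_4$ \emph{no} interleaving of the (at most two) copies of each of the four letters can satisfy all six pairwise conditions simultaneously — this is a short but fiddly finite check, best organized by first showing one may assume exactly one copy of certain letters, then arguing about the position of the unique/first occurrences of $1,2,3,4$ relative to each other.
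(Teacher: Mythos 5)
Your overall strategy (reduce to showing the three labeled graphs themselves are not $12$-representable, via the criterion that for $i<j$ one has $ij\in E$ if and only if every copy of $j$ precedes every copy of $i$ in the representing word) is sound, and your final argument for $I_3$ is correct and essentially the paper's. The genuine gap is in the other two cases: you have misread the labeled graphs $J_4$ and $Q_4$. In Figure~\ref{fig:3graphs}, $J_4$ has edges $13$ and $24$, and $Q_4$ has edges $14$ and $23$; in both, the edge set is a perfect matching between the two small labels $\{1,2\}$ and the two large labels $\{3,4\}$ (this is exactly the paper's hypothesis $H=(\{i,j,k,\ell\},\{ik,j\ell\})$ with $\max\{i,j\}<\min\{k,\ell\}$). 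The graph you describe, with edges $12$ and $34$ and all other pairs non-edges, is \emph{not} one of the forbidden configurations: it is $12$-representable, e.g.\ by the permutation $2143$ (check: $w_{\{1,2\}}=21$ and $w_{\{3,4\}}=43$ have no $12$-match, while all four cross pairs do). So the ``cyclic contradiction'' you propose to chase for your $J_4$ does not exist, and an argument built on that edge set cannot prove the lemma.

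Beyond the misidentification, the $J_4$/$Q_4$ cases are only asserted as a ``fiddly finite check'' rather than carried out; in fact no appeal to Theorem~\ref{thm-at-most-two} or to a finite enumeration of interleavings is needed. For the correct $J_4$, writing $f_x$ and $l_x$ for the positions of the first and last occurrences of $x$, the edges give $l_3<f_1$ and $l_4<f_2$, while the non-edges $14$ and $23$ give $f_1<l_4$ and $f_2<l_3$; chaining these yields $f_1<l_4<f_2<l_3<f_1$, a contradiction. For $Q_4$ the edges give $l_4<f_1$ and $l_3<f_2$, and the non-edges $13$ and $24$ give $f_1<l_3$ and $f_2<l_4$, yielding $f_1<l_3<f_2<l_4<f_1$. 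This is the same first/last-occurrence argument the paper runs (phrased there via leftmost and rightmost occurrences), and it is what your writeup still needs to supply, with the correct edge sets.
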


\begin{proof}
First, suppose that $G$ has a subgraph $H$ such that
$\red(H) = I_3$.  Thus $H$ must be of the form
$H = (\{i,j,k\},\{ij,jk\})$ where $i < j < k$.
Now, for a contradiction,
suppose that $w=w_1 \ldots w_n$
 $12$-represents $G$.  Let $w_m$ be the left-most occurrence of $j$ in $w$. Then
since $ij \in E$, no
 $i$ occurs in $w_1 \ldots w_{m-1}$, and since
$jk \in E$, no $k$ occurs in $w_{m+1} \ldots w_{n}$.
But then, clearly, $w_{\{i,k\}}$ has no $12$-match,
which contradict the condition that $ik\not\in E$.

Next, suppose that $G$ has a subgraph $H$ such that
$\red(H) = J_4$ or $\red(H) = Q_4$.  Thus, $H$ must be of the form
$H = (\{i,j,k,\ell\},\{ik,j\ell\})$ where $\max\{i, j\} < \min\{ k, \ell\}$.
Again, for a contradiction,
suppose that $w=w_1 \ldots w_n$
$12$-represents
$G$.  Let $w_t$ be the right-most occurrence of $k$ in $w$. Then
since $ik \in E$, no $i$ occurs in $w_1 \ldots w_{t-1}$, and since
$jk \not \in E$, it must be the case
that $j$ occurs in $w_1 \ldots w_{t-1}$. Let
$w_s$ be the left-most occurrence of $j$ in $w$. Then $s < t$.  But since $j\ell  \in E$, no $\ell$ occurs in $w_{s+1} \ldots w_{n}$. Next,
let $w_r$ be the right-most occurrence of $\ell$ in $w$. Then
$r < s$. But this would imply that $i\ell \in E$
which is a contradiction.
\end{proof}

An immediate corollary to Lemma~\ref{thm:3graphs} is that in a
12-representable labeled graph, the labels must alternate in size through any induced path in the graph.

Let us say that a labeled graph $G=(V,E)$, where $V \subset \P$, has
a {\em bad path} if $G$ has an induced path $P$ whose endpoints are labeled by two smallest elements in $P$.


\begin{lemma}\label{thm:12paths}
Let $G=([n],E)$ be a labeled graph.
Then if $G$ has
a bad path $P$ of length at least $3$, then
$G$ is not $12$-representable.
\end{lemma}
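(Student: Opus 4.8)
The plan is to generalize the $I_3$ argument from Lemma~\ref{thm:3graphs} (which is exactly the case of a bad path of length $2$, i.e.\ $3$ vertices) to a bad path of arbitrary length at least $3$. Suppose $G=([n],E)$ has a bad path $P = v_0 v_1 \cdots v_\ell$ with $\ell \geq 3$, where $v_0$ and $v_\ell$ are labeled by the two smallest elements occurring among $v_0,\dots,v_\ell$. Assume for contradiction that $w$ $12$-represents $G$. The key structural tool will be the corollary to Lemma~\ref{thm:3graphs}: along any induced path the labels must alternate in size, so as we walk $v_0, v_1, \dots, v_\ell$ the labels go down-up-down-up-\dots\ relative to consecutive comparisons. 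Combined with the fact that $v_0, v_\ell$ are the two smallest, this forces a rigid pattern: $v_0 < v_1$, $v_{\ell-1} > v_\ell$, and the interior labels oscillate.

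First I would record the basic order-theoretic facts about how consecutive and non-consecutive vertices of $P$ sit in $w$. For an edge $ab\in E$ with $a<b$: since $ab$ has a \emph{no} $12$-match, the word $w_{\{a,b\}}$ must avoid the pattern $12$, so every occurrence of $b$ precedes every occurrence of $a$ (using Theorem~\ref{thm-at-most-two} we may even take each letter to occur at most twice, but the ``$b$ before $a$'' conclusion is all that's needed). For a non-edge $ab\notin E$ with $a<b$: $w_{\{a,b\}}$ \emph{must} contain a $12$-match, i.e.\ some occurrence of $a$ precedes some occurrence of $b$. Now I would track the leftmost and rightmost occurrences of each $v_t$ in $w$, writing $L(t)$ and $R(t)$. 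The edge relations along $P$ give, for each $t$, either $R(t) < L(t+1)$ or $R(t+1) < L(t)$ depending on which of $v_t, v_{t+1}$ is larger; chaining these inequalities along the path and using the alternation of label sizes produces a chain of position inequalities. Meanwhile, because $v_0$ and $v_\ell$ are the two smallest elements of $P$, every intermediate $v_t$ ($0<t<\ell$) is larger than both $v_0$ and $v_\ell$; since $v_0 v_t$ and $v_\ell v_t$ are non-edges for $1 < t$ and $t < \ell-1$ respectively (as $P$ is induced), we get $L(0) < R(t)$ and $L(t) < R(\ell)$ — or more precisely the ``must contain $12$'' conditions.

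The heart of the argument is then a short extremal/position-chasing contradiction, analogous to the $Q_4/J_4$ half of Lemma~\ref{thm:3graphs}. Concretely: from the edge $v_0 v_1$ (with $v_0 < v_1$) every $v_1$ precedes every $v_0$, so $R(1) < L(0)$. From $v_{\ell-1}v_\ell$ (with $v_\ell < v_{\ell-1}$) every $v_{\ell-1}$ precedes every $v_\ell$, so $R(\ell-1) < L(\ell)$. Now I walk inward along the path using the alternation of edges to push $L(0)$ to the right and $L(\ell)$ to the left, or more cleanly: the edges $v_1v_2, v_2v_3,\dots$ force a monotone chain of leftmost/rightmost positions connecting the ``$v_1$-block'' to the ``$v_{\ell-1}$-block''. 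The upshot is that $L(0)$ lies to the right of everything in the $v_1$ block, which in turn (via the chain) lies on one side of the $v_\ell$ block, contradicting the requirement that $w_{\{v_0,v_\ell\}}$ contains a $12$-match (i.e.\ $v_0$ must have an occurrence \emph{before} some $v_\ell$, since $v_0 v_\ell$ is a non-edge and, say, $v_0 < v_\ell$ — note $v_0$ is the smallest so this holds). The main obstacle — and the step to be careful about — is handling the parity of $\ell$ and correctly orienting all the consecutive inequalities: the direction of each ``$R < L$'' depends on which consecutive label is bigger, and one must verify the chain actually closes up into a contradiction rather than a vacuous statement. I would first do the cases $\ell = 3$ and $\ell = 4$ explicitly to fix the pattern, then present the general chain by induction on $\ell$ (peeling off the two endpoints $v_0, v_\ell$ and their neighbors reduces to a bad path of length $\ell - 2$ on the induced subgraph, except that $v_1$ or $v_{\ell-1}$ need not be among the two smallest of the shorter path — so a direct chained-inequality argument is cleaner than a naive induction, and that is the version I would ultimately write).
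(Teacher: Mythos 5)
Your plan contains the right raw ingredients, but the step you explicitly defer --- ``verify the chain actually closes up into a contradiction'' --- is exactly where the argument, as described, breaks. The interior edge constraints do not compose: since labels alternate along the path, consecutive edges give position inequalities of alternating direction (e.g.\ $R(v_1)<L(v_2)$ but then $R(v_3)<L(v_2)$), so there is no monotone chain ``connecting the $v_1$-block to the $v_{\ell-1}$-block''. Worse, the contradiction you aim at --- violating the required $12$-match for the non-edge $v_0v_\ell$ --- is unreachable from the constraints you list: to contradict it you would need an upper bound on $R(v_\ell)$ (or on $R(v_0)$), but the only edge incident to $v_\ell$ is $v_{\ell-1}v_\ell$, which bounds $L(v_\ell)$ from below, and every non-edge condition only says some leftmost occurrence precedes some rightmost occurrence; nothing ever forces all copies of $v_\ell$ to precede all copies of $v_0$. (There is also a direction slip: for the non-edge $v_\ell v_t$ with $v_\ell<v_t$, the condition is $L(v_\ell)<R(v_t)$, not $L(v_t)<R(v_\ell)$.) The correct closure uses neither the interior of the path nor the pair $\{v_0,v_\ell\}$: for $\ell\ge 4$, the edge $v_0v_1$ gives $R(v_1)<L(v_0)$, the edge $v_{\ell-1}v_\ell$ gives $R(v_{\ell-1})<L(v_\ell)$, the non-edge $v_0v_{\ell-1}$ gives $L(v_0)<R(v_{\ell-1})$, and the non-edge $v_1v_\ell$ gives $L(v_\ell)<R(v_1)$, yielding the impossible cycle $L(v_0)<R(v_{\ell-1})<L(v_\ell)<R(v_1)<L(v_0)$.

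That four-constraint cycle is, in disguise, the paper's proof, which you could have invoked outright: $12$-representability is hereditary, so it suffices to exhibit a forbidden induced subgraph from Lemma~\ref{thm:3graphs}. For $\ell\ge 4$ the four vertices $v_0,v_1,v_{\ell-1},v_\ell$ induce exactly two disjoint edges with $\max\{v_0,v_\ell\}<\min\{v_1,v_{\ell-1}\}$, i.e.\ a copy of $J_4$ or $Q_4$; for $\ell=3$ (where $v_1v_2$ is an edge, so the four-vertex configuration is not available), reverse the path if necessary so that $v_1<v_2$ and the triple $\{v_0,v_1,v_2\}$ induces $I_3$ --- equivalently, your alternation corollary already settles $\ell=3$, since with $v_0<v_1$ and $v_3<v_2$ one of the two consecutive triples of the path must be monotone. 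With this reduction no positional analysis of $w$, no induction on $\ell$, and no parity discussion is needed; the lemma is a two-line corollary of Lemma~\ref{thm:3graphs}.
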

\begin{proof}
Let $P=x_0x_1\ldots x_s$ and $\max \{x_0,x_s\} < \min \{ x_1,x_2,\ldots, x_{s-1}\}$. If $s\ge 4$
then the reduction of the subgraph induced by $\{x_0,x_1,x_{s-1},x_s\}$ is $J_4$ or $Q_4$. If $s=3$ then without loss of generality $x_1 < x_2$, and the reduction of the subgraph induced by $\{x_0,x_1,x_2\}$ is
$I_3$. In both cases $G$ is not 12-representable by Lemma~\ref{thm:3graphs}. \end{proof}


We note that Lemma~\ref{thm:12paths} does not say that paths
are not
 12-representable, it only states the certain properties of its labeling. In fact, all paths are 12-representable since they are caterpillars, and all caterpillars are permutation graphs \cite{GrCl}.





\begin{lemma}\label{glueing-by-edge} Let $G=(V_G,E_G)$ and $H=(V_H,E_H)$
be $12$-representable graphs.
 Assume that there are labelings of $G$ and
 $H$ such that $x\in V_G$ and $y\in V_H$ receive the smallest or the highest labels.
  Then the graph $G \cup H \cup \{xy\}$
   obtained from
disjoint copies of $G$ and $H$ by adding the edge $xy$, is
 $12$-representable.
 \end{lemma}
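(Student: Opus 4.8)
The plan is to construct an explicit word that $12$-represents $G \cup H \cup \{xy\}$ by carefully concatenating word-representants of $G$ and $H$ after relabeling. First I would use Observation~\ref{lem:reverse-u} together with the combined reverse-supplement symmetry discussed after Observation~\ref{lem:complement-u}: since $x$ receives the smallest or highest label in some labeling realizing the $12$-representability of $G$, and symmetrically for $y$ in $H$, I may assume without loss of generality that $x$ has the \emph{highest} label among the vertices of $G$ and $y$ has the \emph{smallest} label among the vertices of $H$ (the other three cases follow by applying the reverse-supplement operation to one or both of $G$, $H$, which preserves $12$-representability and swaps ``smallest'' with ``highest'').

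Next I would fix the relabeling. Say $G$ has $p$ vertices and $H$ has $q$ vertices. Relabel $H$ so that its vertices use the labels $\{1, 2, \dots, q\}$ with $y$ receiving label $1$, and let $w_H$ be a word that $12$-represents this labeled copy of $H$. Relabel $G$ so that its vertices use the labels $\{q+1, q+2, \dots, q+p\}$ with $x$ receiving the top label $q+p$; by Theorem~\ref{thm-at-most-two} I may take a word $w_G$ $12$-representing this copy in which each letter occurs at most twice (in fact any representant works, but the bounded form keeps the bookkeeping clean). In the union graph, $x = q+p$ and $y = 1$ should be adjacent, every original edge of $G$ and of $H$ should survive, and no edge should be created between a vertex of $G$ (label $> q$) and a vertex of $H$ (label $\le q$) other than $xy$ itself. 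I would then propose the word
$$w = w_G \, (q+p) \, 1 \, w_H,$$
that is, append one extra copy of the label of $x$, then one extra copy of the label of $y$, and then $w_H$, to the right end of $w_G$.

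Then I would verify the three required properties. For a pair $\{a,b\}$ both inside $G$: all occurrences of $a$ and $b$ in $w$ lie in $w_G$ except for the single extra copy of $q+p$ if one of them is $x$; since $x$ has the largest label in $G$, appending one more copy of $q+p$ at the far right of $w_{\{a,b\}}$ cannot create or destroy a $12$-match (the new symbol is the largest and sits last, so it is never the ``1'' of a $12$-match and being last it is never the ``2'' of one either), so the $G$-edges are exactly reproduced. Symmetrically, for a pair both inside $H$, the only extra symbol is the prepended copy of $1$, which is the smallest label in $H$ and sits first, hence is never the ``2'' of a $12$-match and, being first, never the ``1'' of one; so the $H$-edges are reproduced. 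For a cross pair $a \in V(G)$, $b \in V(H)$ with $a \ne x$ or $b \ne y$: note $a > q \ge b$. The word $w_{\{a,b\}}$ consists of all copies of $a$ (the larger letter), which occur in $w_G$ and possibly one extra copy at position after $w_G$ if $a = x$, followed by all copies of $b$ (the smaller letter), which occur after that in $w_H$ and possibly one extra copy at the front of $w_H$'s block if $b = y$. Thus $w_{\{a,b\}}$ has the form $a^s b^t$ with $s, t \ge 1$: a block of the large letter followed by a block of the small letter, which contains no occurrence of the pattern $12$. Hence $ab \notin E(G \cup H \cup \{xy\})$ for all such cross pairs --- exactly as required. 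Finally, for the pair $\{x, y\} = \{q+p, 1\}$: in $w_{\{x,y\}}$, the letter $q+p$ appears (in $w_G$ and then the extra copy), then $1$ appears (the prepended copy and then in $w_H$); since $q+p$ occurs before $1$ and $q+p > 1$, the subword starts $\dots (q+p)\, 1 \dots$ and reading the last copy of $q+p$ followed by the first copy of $1$ gives a descent, not a $12$-match --- wait, I need $xy$ to \emph{be} an edge, so I need $w_{\{x,y\}}$ to have \emph{no} $12$-match; indeed $w_{\{x,y\}} = (q+p)^{s'} 1^{t'}$, a large-letter block followed by a small-letter block, which has no $12$-match, so $xy$ is an edge.

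The main obstacle I anticipate is not any single verification but making sure the ``extremal label'' hypothesis is used correctly: the lemma only promises that $x$ can be given the smallest \emph{or} the highest label and likewise for $y$, and the four combinations are not a priori symmetric under a single operation. I would handle this at the outset by reducing, via Observation~\ref{lem:reverse-u} and Observation~\ref{lem:complement-u} (equivalently the reverse-supplement remark), to the single case ``$x$ highest in $G$, $y$ smallest in $H$'' and then run the construction above; one must check that applying reverse-supplement to, say, only $H$ indeed turns ``$y$ has the smallest label'' into ``$y$ has the highest label'' while keeping $G$'s labeling untouched, and that $12$-representability of the union is invariant under independently relabeling the two pieces (which it is, since the union's edge set is determined by the two induced subgraphs plus the single edge $xy$, and reduction/relabeling preserves $12$-representability by Observation~\ref{lem:complement-u} and the definitions). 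Once that reduction is in place, the word $w = w_G\,(q+p)\,1\,w_H$ works and the three-case check above completes the proof.
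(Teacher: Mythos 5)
Your proposal has a genuine error at its core: the labeling is inverted relative to what the definition requires. Recall that in a $12$-representation $ab$ is an edge \emph{if and only if} $w_{\{a,b\}}$ has \emph{no} $12$-match. With your choice ($G$ on the high labels $\{q+1,\dots,q+p\}$, $H$ on the low labels $\{1,\dots,q\}$, and $w_G$ written before $w_H$), every cross pair $a\in V_G$, $b\in V_H$ gives $w_{\{a,b\}}$ of the form ``big block then small block'', which is $12$-match-free; hence \emph{every} vertex of $G$ would be adjacent to \emph{every} vertex of $H$ in the represented graph, not just $x$ to $y$. In your cross-pair check you conclude from the absence of a $12$-match that $ab\notin E$, which is the opposite of the definition and contradicts the convention you use correctly two sentences later for the pair $\{x,y\}$. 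The construction has to go the other way around: give $G$ the low labels and $H$ the high labels, so that all cross subwords read small-before-big and are non-edges, and then treat the single pair $\{x,y\}$ specially. This is exactly what the paper does: label $x$ by $k$ (top of $G$) and $y$ by $k+1$ (bottom of $H$), then swap these two letters inside the representing words (write $x$'s occurrences as $k+1$ in $w_G$ and $y$'s as $k$ in $w_H$) before concatenating; the swap makes $x$'s written label exceed $y$'s, creating the one edge $xy$, while preserving all relative orders inside each piece.

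A second, independent flaw is the pair of extra letters $(q+p)$ and $1$ you insert between $w_G$ and $w_H$. Your justification that a last letter ``is never the `2' of a $12$-match'' (and a first letter never the `1') is backwards: a final letter can perfectly well be the `2' of a match with its predecessor. Concretely, if $ax\in E_G$ then $(w_G)_{\{a,x\}}$ has the form $x^s a^t$, and appending another copy of $x$'s label produces the adjacent pair $a\,x$, a $12$-match that destroys the edge $ax$; symmetrically, the prepended copy of $1$ creates the match $1\,b$ for every neighbour $b$ of $y$ in $H$. So even setting aside the labeling issue, the word $w_G\,(q+p)\,1\,w_H$ deletes all edges of $G$ at $x$ and all edges of $H$ at $y$ (and the extra letters were not needed for the pair $\{x,y\}$ anyway). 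The opening reduction via Observations~\ref{lem:reverse-u} and~\ref{lem:complement-u} to the case ``$x$ highest in $G$, $y$ smallest in $H$'' is fine and agrees with the paper; the construction and verification after that point need to be replaced by the label-swap concatenation described above.
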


\begin{proof} Suppose, without loss of generality, that in our labelings $V_G=\{1,2,\ldots,k\}$ and  $V_H=\{k+1,k+2,\ldots,\ell \}$.
Moreover, by Lemma~\ref{lem:complement-u} we can assume that $x$ is labeled by $k$ and $y$ is labeled by $k+1$.
Denote by $w_G$ and $w_H$
the words $12$-representing $G$ and $H$,
respectively.
Let $w_G'$ be the word obtained from $w_G$ by replacing each occurrence of $k$ by $k+1$, and $w_H'$ be the word obtained from $w_H$ by replacing each occurrence of $k+1$ by $k$.
It is easy to see that the word $w=w_G'w_H'$
represents the graph $G \cup H \cup \{xy\}$.
\end{proof}

Given two subsets of positive integers $A$ and $B$,
we write $A < B$ if every element of $A$ is less than every element
of $B$, i.~e. $x < y$ whenever $x \in A$ and $y \in B$. A subset $U\subset V$ is a {\it cutset} if $G\setminus U$ is disconnected.

\begin{lemma}\label{thm:disjoint}
 Suppose that $G =([n],E)$ is a labeled graph.
 Let $U$ be a cutset of G.
Denote by $G_1 =(V_1,E_1)$ and
$G_2 = (V_2,E_2)$ two components of $G\setminus U$.
If $G$ is $12$-representable,
$|V_1|\geq 2,\ |V_2| \geq 2$, and the smallest element
of $V_1 \cup V_2$ lies in $V_1$, then $V_1 < V_2$.
\end{lemma}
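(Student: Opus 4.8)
The strategy is to suppose for contradiction that $V_1 \not< V_2$ while $G$ is $12$-represented by some word $w$, and then locate a bad path or one of the forbidden reductions $I_3, J_4, Q_4$, invoking Lemma~\ref{thm:12paths} or Lemma~\ref{thm:3graphs}. Let $a$ be the smallest element of $V_1 \cup V_2$; by hypothesis $a \in V_1$. If $V_1 \not< V_2$, then there exist $p \in V_1$ and $q \in V_2$ with $q < p$. Since $G_1$ is connected and $|V_1| \geq 2$, there is a path in $G_1$ from $a$ to $p$; pick such a path and note every internal vertex and $p$ itself is at least $a$ by minimality of $a$. Similarly $G_2$ is connected with $|V_2| \geq 2$.

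First I would handle the case analysis on how $a$ and $q$ sit relative to each other. The key point is that $a$ and $q$ live in different components of $G \setminus U$, so $aq \notin E$, and more generally there is no edge between $V_1$ and $V_2$. Consider a vertex $b \in V_1$ adjacent to $a$ (exists since $|V_1|\geq 2$ and $G_1$ connected), and a vertex $c \in V_2$ adjacent to $q$. If I can arrange an induced path whose two endpoints are the two smallest labels on the path, I am done by Lemma~\ref{thm:12paths}. The cleanest approach: use the corollary to Lemma~\ref{thm:3graphs} stating that labels alternate in size along any induced path of a $12$-representable graph. Take an induced path $P$ in $G_1$ from $a$ to some vertex, then observe that since $q < p$ for some $p \in V_1$ while $a \leq q$, and no $V_1$–$V_2$ edges exist, one can build a configuration on four vertices $\{a, \text{something in }V_1, q, \text{something in }V_2\}$ whose reduction is $J_4$ or $Q_4$: two disjoint edges $a b$ (inside $G_1$) and $q c$ (inside $G_2$) with no cross edges, where $\max\{a,q\} < \min\{b,c\}$ would need to be checked — and this is exactly the obstacle.

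The main obstacle is precisely verifying the size inequality $\max\{a,q\} < \min\{b,c\}$ needed to match $J_4$ or $Q_4$. We know $a$ is globally smallest so $a < b$ and $a < c$; we know $a \leq q$; but we do not automatically know $q < b$ or $q < c$. To fix this, I would choose $b,c$ more carefully: inside $G_1$, since $|V_1| \geq 2$ and the path from $a$ eventually reaches a vertex $p > q$, walk along an induced path in $G_1$ from $a$ outward; along it the labels alternate by the corollary, so the second vertex $b$ satisfies $b > a$, and if $b \leq q$ we continue — because some vertex on the $G_1$ side exceeds $q$, at some step we cross above $q$, and the alternation forces the relevant local endpoints to straddle $q$ appropriately. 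Symmetrically inside $G_2$. The right way to package this is: pick the edge $ab$ in $G_1$ with $a < b$ (so $b$ is the second-smallest possible, guaranteed by alternation starting at the global minimum $a$), pick any edge $qc$ in $G_2$; if $q < \min\{b, c\}$ we get $J_4/Q_4$ on $\{a, b, q, c\}$ and finish; the remaining cases ($c < q$ or $b \leq q$, i.e., $b$ small) get handled by instead growing the induced path further and applying Lemma~\ref{thm:12paths} to a longer bad path, or by swapping the roles via Observations~\ref{lem:reverse-u} and~\ref{lem:complement-u} (which let us flip "smallest" to "largest"). I expect the write-up to reduce, after this case bookkeeping, to a single application of Lemma~\ref{thm:3graphs} or Lemma~\ref{thm:12paths}, so the real work is organizing the cases so that in each one the required label inequalities hold automatically from $a$ being the global minimum and the absence of $V_1$–$V_2$ edges.
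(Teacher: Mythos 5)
Your proposal identifies the right target configuration (two disjoint edges, one in each component, reducing to $J_4$ or $Q_4$ via Lemma~\ref{thm:3graphs}), but it never actually closes the gap that you yourself flag as ``the main obstacle,'' namely the inequality $\max\{a,q\}<\min\{b,c\}$. Two selection steps are missing, and your fallbacks do not supply them. First, on the $V_2$ side you take an arbitrary witness $q\in V_2$ with $q<p$ for some $p\in V_1$; then a neighbour $c$ of $q$ in $G_2$ may well satisfy $c<q$, and your proposed rescue (``grow the induced path further and apply Lemma~\ref{thm:12paths},'' or flip labels via Observations~\ref{lem:reverse-u} and~\ref{lem:complement-u}) is not an argument: a bad path cannot cross between the two components (there are no edges between them), and the reversal/complement observations relabel the whole graph and do not change which of the four chosen labels is small. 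The fix is simply to take $q$ to be $k=\min V_2$; then any $G_2$-neighbour $\ell$ of $k$ automatically satisfies $\ell>k$, and $k$ still lies below some label of $V_1$ because $V_1<V_2$ fails. Second, on the $V_1$ side you insist that the edge be incident to the global minimum $a$, whose neighbour $b$ need not exceed $k$; your remark that ``alternation forces the relevant local endpoints to straddle $q$'' is not justified and, in fact, alternation is irrelevant here. What is needed is a threshold-crossing edge: split $V_1$ into $C_1$ (labels below $k$, nonempty since it contains the global minimum) and $C_2$ (labels above $k$, nonempty since $V_1<V_2$ fails), and use connectivity of $G_1$ to get an edge $ab$ with $a\in C_1$, $b\in C_2$, i.e.\ $a<k<b$. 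Then $\{a,b,k,\ell\}$ induces exactly two disjoint edges with $\max\{a,k\}=k<\min\{b,\ell\}$, so its reduction is $J_4$ or $Q_4$, contradicting Lemma~\ref{thm:3graphs}. This is precisely the paper's proof; your sketch gestures at the crossing idea (``at some step we cross above $q$'') but leaves both it and the choice of $q$ unfinished, so as written the proof does not go through.
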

\begin{proof}
Let $H=\red(G_1 \cup G_2)$. Then $1\in V_1$. Denote by $k>1$ the smallest element in $V_2$.
Assume that the property $V_1 < V_2$ does not hold. Then $V_1$ contains labels that are greater than $k$.
 Denote by $C_1$ (resp., $C_2$) the set of all vertices in $V_1$ whose labels are less (resp., greater) than $k$.
Since $G_1$ is connected, there is an edge $ab$ such that $a\in C_1$ and $b\in C_2$. Denote by $\ell$ a neighbor of $k$ in $G_2$. Then the reduction of the subgraph induced by $\{ a,b,k,\ell\}$ is either $J_4$ or $Q_4$, and so $G$ is not representable by Lemma~\ref{thm:3graphs}, a contradiction.
Hence, $V_1<V_2$.
\end{proof}

The following theorem provides examples of non-12-representable graphs.
Note that we have shown that $C_3$ and $C_4$ are 12-representable. It turns
out that they are the only 12-representable cycles.

\begin{theorem}\label{cycles-thm}  $C_n$ is not $12$-representable for
any $n \geq 5$.
\end{theorem}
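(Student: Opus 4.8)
The plan is to argue by contradiction using the structural lemmas already established, in particular Lemma~\ref{thm:3graphs} and Lemma~\ref{thm:12paths}. Suppose $C_n$ with $n \geq 5$ has some labeling $G = ([n], E)$ that is $12$-representable, and fix a word $w$ that $12$-represents it. Write the cycle as $v_1 v_2 \cdots v_n v_1$, and consider the labels assigned to the $v_i$. The first observation is that, by the corollary to Lemma~\ref{thm:3graphs}, the labels must alternate in size along any induced path; in particular, going around the cycle, consecutive labels must strictly increase and decrease alternately, so every vertex is either a strict local maximum or a strict local minimum among its two neighbors. For $n$ odd this is already impossible (a cyclic sequence cannot alternate up/down in odd length), so the odd cycles $C_5, C_7, \ldots$ are dispatched immediately. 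Hence we may assume $n$ is even, $n \geq 6$.

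For even $n \geq 6$, I would locate a bad path of length at least $3$ to invoke Lemma~\ref{thm:12paths}. Let $v_i$ be the vertex carrying the globally smallest label among all vertices of the cycle. Removing $v_i$ from $C_n$ leaves an induced path $P$ on $n-1 \geq 5$ vertices, namely $v_{i+1} v_{i+2} \cdots v_{i-1}$ (indices mod $n$). The two endpoints of $P$ are the two neighbors of $v_i$ in the cycle. The key point is to show that, after possibly passing to the complement labeling via Observation~\ref{lem:complement-u} (which, as noted in the text, lets us swap the roles of ``label $1$'' and ``label $n$''), we can arrange that $v_i$'s neighbors carry the two smallest labels among the vertices of $P$. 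Concretely: the vertex $v_i$ has label $1$; among the remaining $n-1$ vertices, I want the two endpoints of $P$ to be the ones with labels $2$ and $3$. This need not hold for an arbitrary labeling, so the honest approach is different: instead, apply Lemma~\ref{thm:disjoint} or a direct small-subgraph argument.

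Here is the cleaner route I would actually take for even $n \geq 6$. Let $v_i$ carry the smallest label. Its two neighbors $a = v_{i-1}$ and $b = v_{i+1}$ are nonadjacent in $C_n$ (since $n \geq 4$). By the alternation corollary applied to the induced path $a\, v_i\, b$, the label of $v_i$ is smaller than both; since $v_i$ is the global minimum that is automatic. Now look one step further out: $a$ has another neighbor $a' = v_{i-2}$ and $b$ has another neighbor $b' = v_{i+2}$, and for $n \geq 6$ the four vertices $a', a, b, b'$ are distinct and induce a path $a' a\, ?\, b b'$ with no chords except the edges $a'a$, $bb'$ — wait, $a$ and $b$ are not adjacent, so $\{a', a, b, b'\}$ induces exactly the two edges $a'a$ and $bb'$, i.e. a graph whose reduction is $J_4$ or $Q_4$ provided $\min\{\text{labels of } a', b'\} < \min\{\text{labels of } a, b\}$ fails in the right way — more precisely, Lemma~\ref{thm:3graphs} applies whenever the two ``outer'' vertices of the two edges carry labels both exceeding both ``inner'' labels, or some such configuration; I would check that the alternation forces $a$ and $b$ to be local maxima or the induced $4$-vertex subgraph on $\{a',a,b,b'\}$ reduces to $J_4$/$Q_4$. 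Since $v_i$ is a local minimum, $a$ and $b$ are both local maxima, so $a > a'$, $a > v_i$, $b > b'$, $b > v_i$; thus on $\{a', a, b, b'\}$ the two edges are $a'a$ and $b'b$ with $a > a'$ and $b > b'$, and $\max\{a', b'\} < \min\{a, b\}$? Not necessarily — but whichever of $a,b$ is larger, say $a$, the induced subgraph on $\{a', b', b, a\}$ has edges $\{a'a, b'b\}$ and satisfies $\max\{a', b'\} < a$ while we need the max of the two low vertices below the min of the two high vertices. If that inequality holds we are done by Lemma~\ref{thm:3graphs}; if it fails, then WLOG $b'$ exceeds $b$... but $b > b'$, contradiction. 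So in fact $\max\{a',b'\} < \min\{a,b\}$ does hold, and the reduction is $J_4$ or $Q_4$, contradicting $12$-representability.

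The main obstacle I anticipate is precisely the bookkeeping in the last paragraph: making sure that the four chosen vertices genuinely induce only the two expected edges (this uses $n \geq 6$ and that $C_n$ is chordless) and that the size relations among their labels match the hypotheses of Lemma~\ref{thm:3graphs} — that both edges are ``oriented'' the same way with the smaller endpoints below the larger endpoints. The alternation corollary to Lemma~\ref{thm:3graphs} does the heavy lifting: it forces the local-max/local-min pattern, and with the global minimum placed at $v_i$ its two neighbors are forced to be local maxima, which pins down the label inequalities. The odd case is immediate from the parity obstruction to alternating cyclically, and the one genuinely special small case to double-check by hand is whether $n = 6$ already exhibits the $J_4$/$Q_4$ induced subgraph (it does, since $v_{i-2}, v_{i-1}, v_{i+1}, v_{i+2}$ are four distinct vertices spanning exactly two independent edges).
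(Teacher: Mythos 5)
Your odd case is exactly the paper's argument: the $I_3$-corollary forces the cyclic label sequence to alternate up/down, which is impossible for odd $n$. Your even case, however, takes a different route from the paper and contains a genuine gap at the decisive step. You choose the global minimum $v_i$, set $a=v_{i-1}$, $b=v_{i+1}$, $a'=v_{i-2}$, $b'=v_{i+2}$, correctly note that for even $n\geq 6$ these four vertices induce exactly the two edges $a'a$ and $b'b$, and correctly deduce $a>a'$, $b>b'$ from the alternation. But then you need $\max\{a',b'\}<\min\{a,b\}$ to invoke Lemma~\ref{thm:3graphs}, and your justification misstates the negation: if the inequality fails, the bad case is $a'>b$ or $b'>a$, not ``$b'$ exceeds $b$''. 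Nothing rules this out, and it genuinely happens: label $C_6$ around the cycle by $1,3,2,6,4,5$. This sequence alternates, the global minimum is $1$, and your four vertices carry labels $a=5$, $a'=4$, $b=3$, $b'=2$, so $\max\{a',b'\}=4>3=\min\{a,b\}$; the induced subgraph on $\{2,3,4,5\}$ has edges $\{23,45\}$, whose reduction (edges $12$ and $34$) is neither $J_4$ nor $Q_4$ and is in fact $12$-representable. So your chosen $4$-set simply fails to produce the forbidden configuration for this labeling, and the proof does not go through as written.

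The labeling above is still impossible, but one has to look elsewhere for the obstruction, and this is what the paper does: since the sequence alternates and $2$ can only exceed $1$, the vertex labeled $2$ must be a local minimum, hence not a neighbor of the vertex labeled $1$; then one of the two arcs of the cycle joining $1$ to $2$ is an induced path of length at least $3$ whose endpoints carry the two smallest labels of that path, i.e.\ a bad path, contradicting Lemma~\ref{thm:12paths}. (In the $C_6$ example this picks out the $4$-set $\{1,2,5,6\}$ with edges $\{15,26\}$, which does reduce to $J_4$.) If you replace your final paragraph with this localization at the label $2$ — or otherwise prove that some suitable $4$-set along the cycle reduces to $J_4$ or $Q_4$ — the argument closes; as it stands, the key inequality is asserted but false in general.
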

\begin{proof}
Suppose for a contradiction that
$C_n$ is 12-representable where $n \geq 5$. Let $1,x_1, \ldots, x_{n-1}$ be the
labels of vertices as we proceed around the cycle in a clockwise order.
Then since no subgraph of $C_n$ can reduce to $I_3$ by
Lemma~\ref{thm:3graphs},
the sequence $1,x_1,x_2, \ldots, x_{n-1},1$ must be an up-down sequence, i.~e.
$1<x_1 >x_2 < x_3 > x_4 < \cdots x_{n-2} < x_{n-1} > 1$.  This is
clearly impossible if $n$ is odd.  Now assume
that $n$ is even. But then consider the position of 2
in the sequence $1,x_1,x_2, \ldots, x_{n-1},1$.  Clearly $2$ cannot
be equal to $x_1$ or $x_n$.  But this means that one of
the two paths that connect 1 to 2 around the cycle would
be a bad path of length at least 3 which is impossible
by Lemma~\ref{thm:12paths}.
\end{proof}

\section{Characterization of $12$-representable trees}\label{trees}

A {\em caterpillar}
is a tree in which all the vertices are within distance 1 of a central path. In this paper, we need the notion of a double caterpillar defined as follows.

\begin{definition} A {\em double caterpillar} $T$ is a tree in which all the vertices are within distance 2 of a central path. Such a path is called a double caterpillar's {\em spine} if it is obtained by first removing all leaves from $T$ and then removing all leaves from the obtained tree. \end{definition}

A {\em star} or {\em star tree}
is the {\em complete bipartite graph} $K_{1,n}$. Here we allow $n\geq 0$, where $n=0$ corresponds to the graph $K_1$ (an  isolated vertex). The {\em centrum} of  a star is the all-adjacent vertex in it. Suppose that a vertex $v$ in a tree $T$ is adjacent to vertices $v_1, v_2,\ldots,v_k$. Removing $v$
we obtain a forest $T\backslash v$ whose $i$th component $T_i$ is determined by the tree having $v_i$ as a vertex. We say that the $i$th component of the forest is {\em good} if it is a star with centrum at the vertex $v_i$.

\begin{lemma}\label{good-comp} If a tree $T$ is
$12$-representable then for any vertex $v$, at most two components $T_i$ of the forest $T\backslash v$ are not good. \end{lemma}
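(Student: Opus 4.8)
The plan is to argue by contradiction: suppose three components $T_i$, $T_j$, $T_k$ of $T\backslash v$ are not good, and extract from them an induced subgraph whose reduction (under some labeling) is one of $I_3$, $J_4$, $Q_4$, or which forms a bad path of length at least $3$; then invoke Lemma~\ref{thm:3graphs} or Lemma~\ref{thm:12paths}. First I would fix a labeling of $T$ realizing its $12$-representability, so that all labels below refer to that fixed labeling, and I would analyze what it means for a component $T_i$ to fail to be good: either $T_i$ is not a star, or $T_i$ is a star but $v_i$ is not its centrum. In the first case $T_i$ contains an induced path $v_i a b$ with $a,b\neq v$; together with the edge $vv_i$ this gives an induced path $v\,v_i\,a\,b$ of length $3$ in $T$ (it is induced because $T$ is a tree, so there are no extra edges). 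In the second case, $v_i$ is a non-centrum vertex of a star, so $v_i$ has a neighbor $c$ in $T_i$ that itself has another neighbor $d$ in $T_i$ with $d\neq v_i$; again $v\,v_i\,c\,d$ is an induced path of length $3$. So every non-good component $T_i$ supplies, through $v$, an induced path $P_i=v\,v_i\,a_i\,b_i$ of length $3$, and moreover for two distinct non-good components $T_i,T_j$ the concatenation $b_i\,a_i\,v_i\,v\,v_j\,a_j\,b_j$ is an induced path of length $6$ in $T$ passing through $v$.

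Next I would use the key corollary of Lemma~\ref{thm:3graphs} — that labels must alternate in size along any induced path — applied to the length-$3$ path $v\,v_i\,a_i\,b_i$. Alternation forces either $v>v_i<a_i>b_i$ or $v<v_i>a_i<b_i$. In the first case $\{v,v_i,a_i,b_i\}$ induces a subgraph reducing to $J_4$ or $Q_4$ unless we are careful, but more directly: the subpath $v_i\,a_i\,b_i$ together with the constraint gives, depending on the pattern, either an $I_3$-reduction or a bad path; I would organize this as follows. Assign to each non-good component $T_i$ a "type" according to whether $v$ is a local max or local min of the alternating path $P_i$ at its $v$-end — equivalently, whether $v>v_i$ or $v<v_i$. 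Since $v$ is a single vertex with a single label, all the paths $P_i$ see $v$ with the same relative position: if $v$ is larger than all its neighbors $v_i$ in the non-good components, then each $P_i$ reads $v>v_i<a_i>b_i$; if smaller, each reads $v<v_i>a_i<b_i$. (A priori $v$ could be larger than some $v_i$ and smaller than others — that case I handle separately and it is actually the easy case, as I explain below.)

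Now take the homogeneous case, say $v>v_i$ for all three non-good components $i,j,k$, so each $P_i$ has the shape $v>v_i<a_i>b_i$. Consider the length-$6$ induced path $Q_{ij}=b_i\,a_i\,v_i\,v\,v_j\,a_j\,b_j$. Its label sequence alternates (by the corollary) and at the center it goes $\ldots a_i>v_i<v>v_j<a_j\ldots$, i.e.\ $v$ is a local maximum. The endpoints are $b_i$ and $b_j$. The key point: among the three components I can choose two, say $i$ and $j$, such that $v$ is \emph{not} the unique largest label on $Q_{ij}$'s interior in a way that makes the two smallest labels of $Q_{ij}$ its endpoints — more precisely, I want to show $Q_{ij}$ (or a sub-path of it) is a bad path. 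Here is the clean way: on each $P_i$ the three interior-ish labels satisfy $v_i<a_i>b_i$, so $v_i$ and $b_i$ are both less than $a_i$; in particular each $P_i$ is itself a bad path of length $3$ precisely when its two endpoints $v$ and $b_i$ are the two smallest — but one endpoint is $v$ which is large. Instead, glue: on $Q_{ij}$, the interior vertex $v$ is a local max, and I claim for a suitable choice of two of the three components the path from $b_i$ to $b_j$ has its minimum interior value exceeding $\max(b_i,b_j)$...

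Let me restate the obstacle honestly: \textbf{the main difficulty is a pigeonhole/ordering argument} showing that out of three non-good components one can always pick two whose associated length-$3$ or length-$6$ paths combine into a forbidden configuration. The cleanest route, which I would pursue, is: (1) if $v$ is a local minimum for at least two of the paths, say $P_i,P_j$ (i.e.\ $v<v_i$ and $v<v_j$), then on $P_i$ we have $v<v_i>a_i<b_i$, and the path $b_i\,a_i\,v_i\,v\,v_j\,a_j\,b_j$ has $v$ as a local min of value smaller than its two neighbors $v_i,v_j$; comparing $v$ with the endpoints $b_i,b_j$: since $a_i<b_i$ and $a_i>v$ we cannot immediately conclude, so instead look at the sub-path $v_i\,v\,v_j$ — this is an induced path of length $2$ with $v$ smaller than both endpoints, hence by Lemma~\ref{thm:3graphs} ($I_3$-reduction: $v<v_i$ and $v<v_j$ with $v_i,v_j$ non-adjacent) we get a contradiction directly. (2) Symmetrically, if $v$ is a local maximum for at least two of the paths, the induced path $v_i\,v\,v_j$ has $v$ larger than both endpoints — again an $I_3$ reduction, contradiction. (3) Since there are three non-good components and only two "types" for the $v$-end, by pigeonhole two of them share a type, so case (1) or (2) applies. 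This last pigeonhole is exactly why the bound is "two": with only two non-good components we cannot force two of the same type in a way that produces $v_i\,v\,v_j$ with $v$ extremal — wait, with two components of the same type we would, so I must double-check that two non-good components of opposite type do \emph{not} yield a contradiction, which matches the lemma's claim that two bad components are permitted. I expect verifying this consistency (that exactly two, of opposite type, is genuinely realizable and not forbidden) to be the delicate bookkeeping, but the forbidden direction — three components force two of one type, hence an induced $v_i\,v\,v_j$ with $v$ a strict extremum, hence $I_3$ — is the spine of the argument and should go through cleanly via Lemma~\ref{thm:3graphs}.
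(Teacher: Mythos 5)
Your proposal has a fatal flaw at its central step: the claimed $I_3$-reduction for the path $v_i\,v\,v_j$ when $v$ is smaller (or larger) than both $v_i$ and $v_j$. The graph $I_3$ is the path on $\{1,2,3\}$ with edges $12$ and $23$, i.e.\ the \emph{middle} vertex of the path carries the \emph{middle} label. If the middle vertex $v$ carries the smallest label, the induced subgraph reduces to $(\{1,2,3\},\{12,13\})$, which is not $I_3$ and is not forbidden at all: the star $K_{1,2}$ with center labeled $1$ and leaves $2,3$ is $12$-represented by the permutation $231$. So Lemma~\ref{thm:3graphs} gives no contradiction in your cases (1) and (2), and the pigeonhole "three components force two of the same type" collapses. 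In fact your argument never uses the non-goodness of the components (only the edges $vv_i$), so if it worked it would forbid $K_{1,3}$ with its center labeled extremally --- but $K_{1,3}$ is $12$-representable, so the approach proves a false statement. The forbidden $I_3$ situation is the opposite one: $v$'s label lying strictly \emph{between} the labels of two of its neighbors, which is exactly one of the paper's cases ($m_1<r<m_3$).

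What your proposal is missing is the mechanism the paper uses for the remaining case, when $v$'s label $r$ is below all three neighbors' labels $m_1<m_2<m_3$ (or above all of them, handled by the supplement symmetry). There no $I_3$ appears among $\{v,v_1,v_2,v_3\}$, and one must exploit the internal structure of a non-good component together with Lemma~\ref{thm:disjoint}: the label sets of the components of $T\backslash v$ are totally ordered, $T_1<T_2<T_3$. Non-goodness of the \emph{middle} component $T_2$ yields an edge $y_1y_2$ inside $T_2$ with neither endpoint adjacent to $v$, and then $r<\min\{y_1,y_2\}<\max\{y_1,y_2\}<m_3$ makes the induced subgraph on $\{v,y_1,y_2,v_3\}$ (edges $vv_3$ and $y_1y_2$ only) reduce to $Q_4$, contradicting Lemma~\ref{thm:3graphs}. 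Your proposal invokes neither Lemma~\ref{thm:disjoint} nor any $J_4/Q_4$ configuration in this case, and no amount of bookkeeping along the length-$3$ or length-$6$ paths you construct will substitute for it, since (as your own hesitation about "two components of opposite type" suggests) the path-alternation constraints alone do not distinguish two from three non-good components.
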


\begin{proof} Note that all trivial (one-vertex) components of $T\backslash v$ are good by the definition. Let $T_1,T_2, \ldots,T_k$ be non-trivial components of $T\backslash v$. By Lemma~\ref{thm:disjoint} we can assume that the labels of these components satisfy the property $T_1< T_2< \cdots< T_k$.

Now, suppose that there are three components of the forest $T\backslash v$
which are not good. Without loss of generality, we can assume that these components are $T_1$, $T_2$ and $T_3$. Further, assume that the vertices $v$, $v_1$, $v_2$ and $v_3$ receive labels $r$, $m_1<m_2<m_3$, respectively, in some labeling $T'$ realizing representability of $T$. Since $T_2$ is not good, it contains two vertices $y_1,y_2$ such that $y_1m_2,y_1y_2\in E$ and $y_2m_1\not\in E$. Note that a similar statement is true for $T_1$ and $T_3$.
The structure of these components is schematically shown in Figure~\ref{fig:3tree}.

Note that if $m_1<r<m_3$ then we obtain a contradiction with Lemma~\ref{thm:3graphs} since the reduction of $\{v, v_1,v_3\}$ induces $I_3$.

\fig{3tree}{The structure of components in $T\backslash v$ which are
not good.}

We can now assume that $r<m_1$, since for the case $r>m_3$ we can take the supplement of $T'$ and apply the observation about
12-representable graphs given at the end of Section 2.
Since $T_1<T_2<T_3$ we have $r<\min \{y_1,y_2\}<\max \{y_1,y_2\} <m_3$, and therefore the subgraph induced
by the vertices $\{r,y_1,y_2,m_3\}$ reduces to a copy of $Q_4$, which is impossible
by Lemma~\ref{thm:3graphs}.
\end{proof}

Note that a tree $T\backslash v$ can have two components which are
not good (see, for example, Figure~\ref{fig:F9}), and thus Lemma~\ref{good-comp} cannot be enhanced.

\fig{F9}{Removal of $3$ produces a forest with two components which
are not good.}

The main result of this section is the following characterization of
12-representable trees.

\begin{theorem}\label{trees-characterization} A tree $T$ is
$12$-representable if and only if it is a double caterpillar.\end{theorem}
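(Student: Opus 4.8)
The plan is to prove both directions separately. For the forward direction, I would show that if $T$ is $12$-representable then $T$ is a double caterpillar, by combining Lemma~\ref{good-comp} with the tree characterization of double caterpillars. The key structural observation is the following: a tree fails to be a double caterpillar if and only if there is a vertex $v$ whose removal leaves a forest with three or more components that are not good (i.e.\ not stars centered at the neighbor of $v$); equivalently, fix a longest path and if some vertex off the ``depth-2 neighborhood'' of every longest path exists, one can locate a branching vertex with three ``deep'' branches. So the first step is to establish this combinatorial equivalence for trees purely in graph-theoretic terms, which is elementary but needs care: I would argue that $T$ is a double caterpillar iff after twice peeling leaves we are left with a path (the spine), and relate being ``within distance 2 of the spine'' to the good-component condition at each spine vertex and each neighbor of a spine vertex. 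Then Lemma~\ref{good-comp} immediately forbids the three-bad-component configuration, giving the forward implication.

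For the converse, I would show every double caterpillar is $12$-representable by an explicit construction, building the representing word along the spine. Let the spine be $p_1 p_2 \cdots p_m$. Each spine vertex $p_i$ has attached to it some pendant stars (its depth-1 neighbors, and their leaves at depth 2). The idea is to process the spine from one end to the other, assigning labels so that, roughly, everything attached to $p_i$ gets labels in an interval $B_i$ with $B_1 < B_2 < \cdots < B_m$, and within each block one arranges the local star structure. The building blocks are: a single edge (trivially $12$-representable via a $2$-letter permutation), a star $K_{1,t}$ (a permutation graph, hence $12$-representable by Proposition~\ref{perm-graphs-versus-12-match}), and the operation of adding a copy of a vertex (Lemma~\ref{copies-lemma}) to create the depth-2 leaves. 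Crucially, Lemma~\ref{glueing-by-edge} lets me glue two already-constructed $12$-representable pieces along an edge $xy$ provided $x$ and $y$ carry extreme (smallest or largest) labels in their respective pieces. So I would induct on $m$: having a word $12$-representing the double caterpillar built from $p_1,\dots,p_{m-1}$ with $p_{m-1}$ carrying the largest label, I attach the local star around $p_m$ (itself $12$-representable with $p_m$ at its smallest label), glue along the edge $p_{m-1}p_m$ via Lemma~\ref{glueing-by-edge}, and then re-introduce depth-2 leaves around $p_m$ using Lemma~\ref{copies-lemma}. One must check that $p_m$ can be kept at an extreme label after these operations so the induction continues; this is where the construction needs to be set up carefully, choosing which end is ``small'' and which is ``large'' and possibly using the supplement symmetry from the end of Section~2 to flip extremes.

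The main obstacle I expect is the converse construction, specifically the bookkeeping needed so that Lemma~\ref{glueing-by-edge} applies at every step: each gluing requires the shared vertex to have an extreme label in its block, and each application of Lemma~\ref{copies-lemma} shifts labels and inserts a new label adjacent to the copied one, so one must verify that the spine vertex being glued next still sits at the extreme after the depth-2 leaves of the previous spine vertex have been added. A clean way to handle this is to first build the whole ``depth $\le 1$'' double caterpillar (the underlying caterpillar on the spine together with the depth-1 pendants) — which is itself a caterpillar, hence a permutation graph, hence $12$-representable by a single permutation with, say, $p_m$ at the largest position — and then add all depth-2 leaves at once by repeated application of Lemma~\ref{copies-lemma} to the appropriate depth-1 vertices. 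Since adding a copy of $v$ places the new vertex's label immediately after $v$'s and makes it non-adjacent to $v$ while duplicating $v$'s neighborhood, this exactly realizes a pendant leaf hanging off a depth-1 vertex, and it does not disturb the caterpillar already represented. This reduces the whole converse to the single fact that caterpillars are permutation graphs \cite{GrCl} plus iterated use of Lemma~\ref{copies-lemma}, sidestepping the delicate interplay with Lemma~\ref{glueing-by-edge} entirely; the remaining work is just confirming that the depth-2 additions correspond precisely to adding copies of the right vertices, which is routine.
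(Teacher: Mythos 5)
Your necessity argument is essentially the paper's: a longest path plus a vertex at distance~$3$ from it yields a vertex whose removal leaves three non-good components, contradicting Lemma~\ref{good-comp}; that part is fine (only the implication ``not a double caterpillar $\Rightarrow$ such a vertex exists'' is needed, and your longest-path sketch is exactly how the paper gets it).

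The sufficiency direction, however, has a genuine gap, and it sits precisely where you claim to have ``sidestepped'' the hard work. Lemma~\ref{copies-lemma} adds a \emph{false twin}: the copy $v'$ is adjacent to all neighbours of $v$ and \emph{not} to $v$ itself. So if $u$ is a depth-$1$ vertex whose only neighbour is the spine vertex $s$, adding a copy of $u$ produces another pendant leaf attached to $s$ --- a sibling of $u$ at depth~$1$ --- and emphatically not ``a pendant leaf hanging off $u$.'' To create a depth-$2$ leaf below $u$ by copying, you would need an existing vertex whose neighbourhood is exactly $\{u\}$, i.e.\ a depth-$2$ leaf already attached to $u$; the first such leaf at each depth-$1$ vertex can never be produced this way. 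Hence your ``clean way'' (caterpillar $=$ permutation graph, then copies) only ever builds caterpillars, and patching it by starting from the tree with one depth-$2$ leaf per relevant depth-$1$ vertex does not help, since that tree is itself a non-caterpillar double caterpillar and so not a permutation graph --- you are back to needing the core construction. Your first variant (glue the local structure around $p_m$ via Lemma~\ref{glueing-by-edge}) suffers the same copy-lemma misuse for the depth-$2$ leaves, and in addition leaves unresolved exactly the bookkeeping you flag: after gluing, $p_m$ receives the label $k+1$, which is not extreme in the combined graph, so the next gluing step is not licensed. The paper resolves this by proving a stronger inductive statement for \emph{uniform} double caterpillars $DC(P_{2n})$ --- that there is a labeling with the two spine endpoints carrying the labels $1$ and $2n(k+1)$ --- splitting the spine into two even halves and gluing them by Lemma~\ref{glueing-by-edge}, with an \emph{explicit} word and labeling for the base case $DC(P_2)$; the copy lemma is used only in the legitimate direction (to assume no leaf has a sibling, and to restore multiplicities at the end), and general double caterpillars are then handled as induced subgraphs of uniform ones. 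Without something playing the role of that explicit base case and the extreme-endpoint induction, your sufficiency proof does not go through.
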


\begin{proof} {\bf Necessity.} Suppose that a tree $T$ is
not a double caterpillar. Further, suppose that $P=v_1v_2\ldots v_k$ is a longest path in $T$. Since all trees of diameter 5 are double caterpillars, $P$ has at least six edges, and thus $k\geq 7$. By our assumption, $T$ has a vertex $v$ at distance 3 from $P$. Suppose that $v_i$ is the closest to $v$ vertex on the path $P$. Since $P$ is of maximum length, we have $i\in\{4,5,\ldots,k-3\}$. But then in the forest $T\backslash v_i$ at least three components
which are not good, namely those containing $v$, $v_1$ and $v_k$. Thus by Lemma~\ref{good-comp}, $T$ is not $12$-representable.

\fig{caterpillar}{A uniform double caterpillar with even spine.}

\noindent
{\bf Sufficiency.} By
Lemma~\ref{copies-lemma}, we can assume that no leaf has a sibling. To show that any such double caterpillar is
12-representable, we will use induction on the length of double caterpillar's spine, and prove the statement for {\em uniform double caterpillars} $DC(P_{2n})$ with even spines $P_{2n}=v_1v_2\ldots v_{2n}$ presented schematically in
Figure~\ref{fig:caterpillar};
then any other double caterpillar will be 12-representable due to Lemma~\ref{copies-lemma} and Observation~\ref{subgraph}.

\fig{caterpillar-base}{The labeling of $DC(P_2)$.}

We will prove even a stronger statement, namely that there is a labeling of $DC(P_{2n})$ in which the label of $v_1$ is 1 and that of $v_{2n}$ is the maximum label $2n(k+1)$. The base of the induction is given by labeling $DC(P_2)$ presented in Figure~\ref{fig:caterpillar-base}, and
the following 12-representant:
$$24365\ldots(2k+2)(2k+1)(2k+4)(2k+6)\ldots (4k+2)(4k+4)135\ldots$$
\vspace{-20pt}
$$(2k+1)(2k+4)(2k+3)(2k+6)(2k+5)\ldots (4k+2)(4k+1)(4k+3)$$
stated on two lines. It is straightforward to check that this word has the right alternating properties.

Now, suppose that we are given a double caterpillar $DC(P_{2n})$. Choose any $1\leq r\leq n-1$  and remove the edge $v_{2r}v_{2r+1}$ on $DC(P_{2n})$'s spine. We get two double caterpillars with even spines $DC(P_{2r})$ and  $DC(P_{2(n-r)})$ on $s=2r(k+1)$ and $t=2(n-r)(k+1)$ vertices respectively. We can now apply the induction hypothesis to $DC(P_{2r})$ and  $DC(P_{2(n-r)})$, i.~e. consider the labeling of $DC(P_{2r})$ where $v_1$ has the smallest label $1$ and $v_{2r}$ has the largest label $s$, and
the labeling of $DC(P_{2(n-r)})$ where $v_{2r+1}$ has the smallest label $s+1$ and $v_{2n}$ has the largest label $s+t$.
Now apply Lemma~\ref{glueing-by-edge} to connect these graphs by the edge $v_{2r}v_{2r+1}$ thus obtaining a
labeling realizing 12-representability of $DC(P_{2n})$ in such a way that $v_1$ has the smallest label $1$ and $v_{2n}$ has the largest label $s+t=2n(k+1)$ (recall that in the proof of  Lemma~\ref{glueing-by-edge} no vertices except for the endpoints of the inserted edge changed their labels).
 \end{proof}

\fig{FullBin}{A 12-representation of the full binary tree of
height 3 plus one vertex.}

Note that the labeling presented in the proof of Theorem~\ref{trees-characterization} is not the only possible labeling for double caterpillars. For example, the tree presented in Figure~\ref{fig:FullBin} has the spine $1,9,8,15$, while the maximum label is $16$.

%

\section{12-representable graphs and known classes of graphs}\label{graph-classes}

The goal of this section is to justify Figure~\ref{overview}.

Let us recall the definitions and known properties of some graph classes.
A {\em comparability graph} is an undirected graph that connects pairs of elements that are comparable to each other in a partial order (a poset). Comparability graphs are also known as {\em transitively orientable graphs} or {\em partially orderable graphs}. A {\em transitive orientation} of a graph is an acyclic orientation that has a property that if $a\rightarrow b$ and $b\rightarrow c$  are arcs then we must have the arc $a\rightarrow c$.  A graph $G$ is a {\em co-comparability graph} if its complement $G^c$ is a comparability graph.
 It is known \cite{DM} that a graph $G$ is a permutation graph if and only if both $G$ and its complement $G^c$ are comparability graphs. An {\em interval graph} is the intersection graph of a family of intervals on the real line. It has one vertex for each interval in the family, and an edge between every pair of vertices corresponding to intervals that intersect. A graph $G$ is {\em co-interval} if its complement $G^c$ is an interval graph.
A graph is {\em chordal} if it has no induced cycle on at least 4 vertices. It is a well known fact \cite{GH} that a graph is an interval graph if and only if it is chordal and a co-comparability graph.

As it is mentioned in the introduction, any comparability graph is word-representable \cite{KS}, and any odd cycle of length 5 or more, being a
non-comparability graph, is word-representable \cite{KL}. Moreover, odd wheels on six or more vertices are non-word-representable~\cite{KP2008}, and the set of $1^k$-representable graphs, for any $k\geq 3$, coincides with the set of all graphs by Theorem~\ref{11111-matching}. Our next result shows that any $12$-representable graph is necessarily a comparability graph.

\begin{theorem} If $G$ is a $12$-representable graph, then $G$ is a comparability graph.\end{theorem}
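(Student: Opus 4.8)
The plan is to show that if a word $w$ $12$-represents a labeled graph $G=([n],E)$, then the orientation of the edges of $G$ induced by the natural order on $[n]$ — that is, orient $xy$ as $x\to y$ whenever $x<y$ and $xy\in E$ — is a transitive orientation. Since a graph is a comparability graph precisely when it admits a transitive orientation, this suffices. So what must be verified is acyclicity (which is immediate: the orientation is consistent with the total order $<$, so it cannot contain a directed cycle) together with the transitivity closure condition: if $x\to y$ and $y\to z$ are arcs then $x\to z$ is an arc, i.e.\ $xz\in E$.

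First I would fix arcs $x\to y$ and $y\to z$, so $x<y$, $y<z$, $xy\in E$, $yz\in E$; in particular $x<z$, so it only remains to show $xz\in E$, equivalently that $w_{\{x,z\}}$ has no $12$-match. Since $xy\in E$, the word $w_{\{x,y\}}$ has no $12$-match, which, because $x<y$, forces every occurrence of $x$ to come after every occurrence of $y$ in $w$; equivalently, no $x$ precedes any $y$ in $w$. Similarly, $yz\in E$ with $y<z$ forces no $y$ to precede any $z$ in $w$. Combining the two: if some $x$ preceded some $z$ in $w$, pick the leftmost occurrence of $x$ and the rightmost occurrence of $z$; since the leftmost $x$ comes after every $y$ (assuming $y$ occurs at all) and the rightmost $z$ comes before every $y$, we would get a $y$ occurring both after the leftmost $x$ — contradiction with "no $x$ precedes any $y$" is not quite it; rather the cleaner route is: from "no $x$ before any $y$" and "no $y$ before any $z$" one deduces "no $x$ before any $z$", because any occurrence of $x$ sits to the right of all $y$'s, and all $y$'s sit to the right of all $z$'s, hence $x$ sits to the right of all $z$'s. (If $y$ does not occur in $w$ at all this degenerate case still needs a word, but it cannot arise since $xy\in E$ means $y\in A(w)$.) Therefore $w_{\{x,z\}}$ has the form $z\cdots z\, x\cdots x$, which has no $12$-match, so $xz\in E$.

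The only genuinely delicate point, which I would state carefully as a preliminary observation, is the translation "$ab\in E$ with $a<b$ $\iff$ no occurrence of $a$ precedes any occurrence of $b$ in $w$"; this is exactly the assertion that $w_{\{a,b\}}$ avoids the consecutive pattern $12$, read in terms of the reduced word on two symbols, and it is already implicit in the remark at the end of Section~2 and in the proof of Theorem~\ref{thm-at-most-two}. Once that dictionary is in place, the transitivity argument above is a two-line deduction about linear orders, and acyclicity is free. I do not expect any real obstacle here; the main thing to be careful about is orienting edges consistently with $<$ (not with position in $w$) and making sure the "no $a$ before any $b$" statement is applied in the correct direction for each of the two hypothesis edges.
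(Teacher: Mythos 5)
Your proposal is correct and takes essentially the same route as the paper: orient each edge from its smaller to its larger label, note acyclicity is immediate, and verify transitivity, which the paper gets by citing the forbidden induced pattern $I_3$ (Lemma~\ref{thm:3graphs}) while you re-derive the same fact directly from the word via the observation that $xy\in E$ with $x<y$ forces $w_{\{x,y\}}$ to have all $y$'s before all $x$'s. The only caveat is your parenthetical about a vertex possibly missing from $w$: the correct reason this cannot happen is the standing convention that a representing word contains every vertex of the graph, not that $xy\in E$ by itself forces $y$ to occur; this is a cosmetic point, not a gap.
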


\begin{proof}
By Lemma~\ref{thm:3graphs}, any induced path $P$ of length 3 is such that $\red(P)\neq I_3$. We now direct edges in $G$ so that if $ab$ is an edge and $a<b$ then the arc $a\rightarrow b$ goes from $a$ to $b$. This orientation is obviously acyclic. We claim that this orientation is, in fact, transitive, which completes the proof of our theorem. Indeed, if the directed copy of $G$ contains a directed path $\vec{P}$ of length 3, say $a\rightarrow b\rightarrow c$, then we must have the arc $a\rightarrow c$ in the graph or otherwise $\red(\vec{P})=I_3$.    \end{proof}




\begin{figure}[ht]
\begin{center}
\includegraphics[scale=0.9]{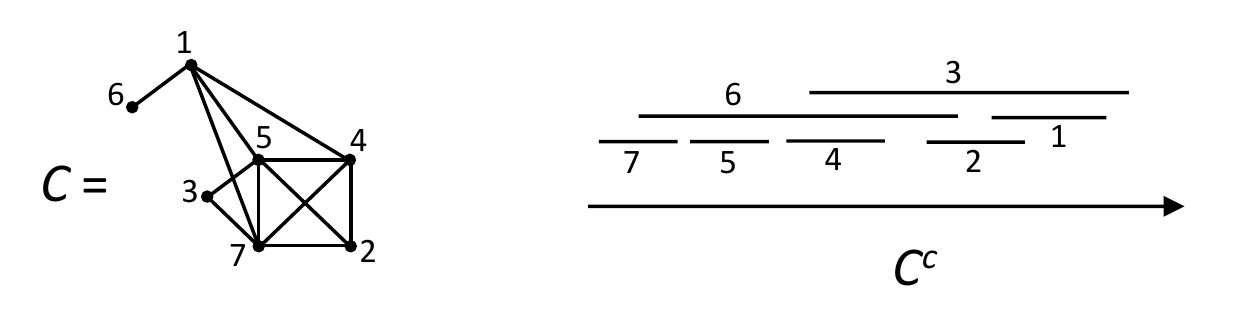}
\end{center}
\vspace{-10pt}
\caption{A co-interval graph $C$ and an interval representation of its complement $C^c$.}
\label{co-interval}
\end{figure}

\begin{theorem} If $G$ is a co-interval graph, then $G$ is $12$-representable.\end{theorem}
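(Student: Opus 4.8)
The plan is to exploit the structure of co-interval graphs directly: if $G$ is co-interval, then $G^c$ is an interval graph, so there is a family of intervals $I_1, \ldots, I_n$ on the real line with $I_x \cap I_y \neq \emptyset$ iff $xy \in E(G^c)$, i.e. iff $xy \notin E(G)$. First I would pick a convenient labeling. A standard fact about interval graphs is that one can always choose an interval representation in which all left endpoints are distinct and all right endpoints are distinct; moreover one can sort the intervals by left endpoint. So I would label the vertices $1, 2, \ldots, n$ so that $\text{left}(I_1) < \text{left}(I_2) < \cdots < \text{left}(I_n)$. With this labeling, for $x < y$ we have $I_x \cap I_y = \emptyset$ iff $\text{right}(I_x) < \text{left}(I_y)$ (since $I_x$ starts first), and hence $xy \in E(G)$ iff $\text{right}(I_x) < \text{left}(I_y)$.

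Next I would build the word. The natural candidate is a word $w$ of length $2n$ using each letter exactly twice, where the ordering of letters encodes the endpoint data. Think of placing, for each vertex $x$, a ``first copy'' of $x$ near $\text{left}(I_x)$ and a ``second copy'' of $x$ near $\text{right}(I_x)$, and reading off the letters in order along the line. Concretely: the first copies appear in increasing order $1, 2, \ldots, n$ (since left endpoints are already sorted); the second copies appear in the order determined by sorting right endpoints; and each second copy of $x$ is interleaved among the first copies according to where $\text{right}(I_x)$ falls relative to the left endpoints. I would then verify: $w_{\{x,y\}}$ for $x < y$ is one of a few shapes — either $x y x y$, $x x y y$, $x y y x$, or $y$-first variants, but since first copies are sorted, the first letter of $w_{\{x,y\}}$ is always $x$; so the options are $xxyy$, $xyxy$, or $xyyx$. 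A $12$-match in $w_{\{x,y\}}$ (with $x<y$) occurs iff some occurrence of $x$ is immediately followed by an occurrence of $y$, which happens precisely when $w_{\{x,y\}}$ is \emph{not} $xxyy$ — that is, iff $I_x$ and $I_y$ overlap. So $w_{\{x,y\}}$ has no $12$-match iff $w_{\{x,y\}} = xxyy$ iff $\text{right}(I_x) < \text{left}(I_y)$ iff $xy \in E(G)$, exactly as required.

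The key steps in order: (1) invoke the interval representation of $G^c$ and normalize it to have distinct, ``generic'' endpoints; (2) define the labeling by sorted left endpoints; (3) define $w$ explicitly as the sequence of $2n$ endpoint-events read left to right, with event at a left endpoint of $I_x$ emitting letter $x$ and similarly at right endpoints; (4) compute $w_{\{x,y\}}$ for each pair $x<y$ and observe it has the form $xxyy$ precisely when the intervals are disjoint; (5) conclude that ``no $12$-match'' matches adjacency in $G$. I would probably also remark that this simultaneously reproves Proposition~\ref{perm-graphs-versus-12-match}-style intuition and that the word uses each letter at most twice, consistent with Theorem~\ref{thm-at-most-two}.

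The main obstacle I anticipate is purely bookkeeping rather than conceptual: making airtight the claim that reading endpoint-events left to right gives, for every pair $x < y$, a restricted word whose first letter is $x$ and which is disjoint-case $xxyy$ versus overlap-cases $xyxy$/$xyyx$. One has to handle the edge case where a right endpoint of one interval coincides with a left endpoint of another (resolved by the genericity normalization, or by a tie-breaking convention), and to be careful that ``$I_x$ and $I_y$ overlap'' in the interval representation of $G^c$ corresponds to a \emph{non-edge} of $G$, so the no-$12$-match word-representant of $G$ indeed omits exactly the overlapping pairs. Once the correspondence ``$12$-match $\iff$ intervals overlap $\iff$ non-edge of $G$'' is nailed down, the theorem follows immediately.
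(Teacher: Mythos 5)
There is a genuine error in your verification step, and it is not just bookkeeping: with your labeling the construction represents the wrong graph. Recall that a $12$-match is a \emph{consecutive} ascent, so for $x<y$ the word $w_{\{x,y\}}$ has no $12$-match if and only if it is weakly decreasing, i.e.\ of the form $y\cdots y\,x\cdots x$. Your claimed criterion fails on your own case analysis: $xxyy$ \emph{does} contain a $12$-match (the second $x$ is immediately followed by a $y$), as do $xyxy$ and $xyyx$. Since you label so that left endpoints increase with the label and you read endpoint-events left to right, the first letter of $w_{\{x,y\}}$ is always $x$ for every pair $x<y$; hence every pair has a $12$-match and your word $12$-represents the edgeless graph on $[n]$, not $G$ (unless $G$ has no edges).

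The fix is exactly the point where your choice diverges from the paper: the labeling must be reversed. The paper assigns label $n-i+1$ to the interval whose left endpoint is $i$th from the left, so that for $i<j$ a disjoint pair yields $w_{\{i,j\}}=jjii$ (weakly decreasing, no $12$-match, hence an edge of $G$), while an overlapping pair yields $jiji$ or $jiij$ (each containing the ascent $ij$, hence a non-edge). Alternatively, you could salvage your construction as written by observing that your word $21$-represents $G$ under your labeling ($xxyy$ has no descent, while $xyxy$ and $xyyx$ do), and then pass to $12$-representability by reversing the word, in the spirit of Observation~\ref{lem:reverse-u}. Either repair is short, but as stated the key equivalence ``no $12$-match $\iff$ $xxyy$ $\iff$ disjoint intervals'' is false and the proof does not go through.
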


\begin{proof} Suppose that $G$ is a co-interval graph on $n$ vertices. It is a well-known easy fact that for any interval graph, there is its interval representation such that the endpoints of intervals are all distinct. Consider such an interval representation of the complement graph $G^c$. Next, put to an interval in this representation a label $n-i+1$ if the left endpoint of this interval is the $i$th one from left to right among all left endpoints. Such a labeling induces a labeling of $G$. We refer to Figure~\ref{co-interval} for an example of a co-interval graph $C$ and its labeling based on the endpoints of the intervals.

Next, form a word $w$ corresponding to labeled intervals by going through all interval endpoints (both left and right endpoints) from left to right and recording their labels in the order we meet them. For example, for the labeled interval representation in Figure~\ref{co-interval}, the word $w$ is $76755434261213$. Optionally, all occurrences of $ii$, like $55$ in the last word, can be replaced by a single $i$. We claim that the word $w$ $12$-represents $G$. Indeed, let $i<j$. If $i$th and $j$th intervals overlap, then $w_{\{i,j\}}=jiji$ or $w_{\{i,j\}}=jiij$; anyway, $i$ and $j$ are not adjacent. Otherwise,
by the choice of the labeling, the $i$th interval lies directly to the right from the $j$th one, and thus $w_{\{i,j\}}=jjii$, i.~e. $ij$
is an edge.
\end{proof}

To conclude our description of Figure~\ref{overview}, we would like to justify that the Venn diagram presented by us is proper, namely that there are strict inclusions of sets and also there is no inclusion of the class of co-interval graphs into the class of permutations graphs, and vice versa, and these classes do overlap. Note that it remains to explain the set inclusions only inside the class of $12$-representable graphs since the rest of the diagram has been already explained above.

\begin{figure}[ht]
\begin{center}
\includegraphics[scale=0.9]{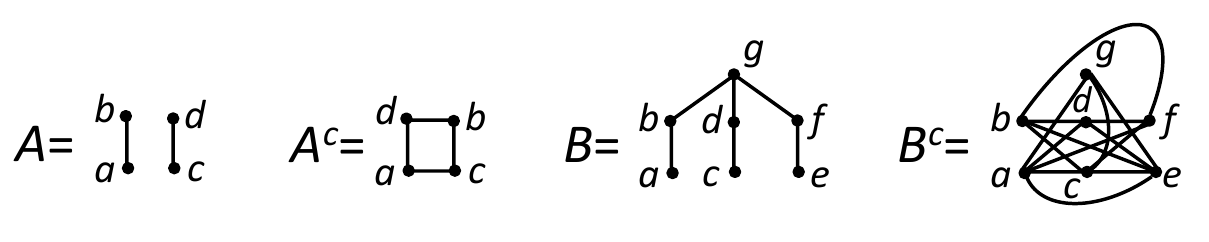}
\end{center}
\vspace{-10pt}
\caption{Graphs $A$ and $B$ and their complements $A^c$ and $B^c$.}
\label{four-graphs}
\end{figure}

Clearly, complete graphs
are both co-interval graphs (for the set of non-intersecting intervals) and permutation graphs (for the identity permutation).
In Figure~\ref{four-graphs}, there are two graphs, $A$ and $B$, and their complements $A^c$ and $B^c$.
The graph $A$ is a permutation graph (for 2143) but not a co-interval graph, because its complement is not chordal.
The graph $B$ is $12$-representable by Theorem~\ref{trees-characterization}, while it is neither a permutation graph nor a co-interval graph since $B^c$ is neither a comparability graph \cite{GrCl} (note that $B=T_2$ in their notation) nor a chordal graph ($adbc$ in an induced $C_4$).

Finally, for the sake of completeness, let us provide an example of a co-interval graph that is not a permutation graph.
Consider the graph $G_n$ whose vertices are defined by all intervals of non-zero length with left endpoints in the set $\{0,1,\ldots,n\}$ and right endpoints in the set $\{1-\epsilon,2-\epsilon,\ldots,n-\epsilon\}$, where $\epsilon\in (0,1)$. Further, two vertices are connected in $G_n$ by an edge if and only if the intervals corresponding to them do not overlap. By definition, $G_n$ is a co-interval graph. It is therefore a comparability graph corresponding to the following poset $P$ on $V(G_n)$: $I<J$ if and only if the interval $I$ lies entirely to the left of the interval $J$.
We claim that $G_n$ is not a permutation graph if $n$ is large enough.
This follows from two known facts. First \cite{BFR}, a graph G is a permutation graph if and only if it is the comparability graph of a poset that has dimension at most $2$. On the other hand, the Example 8.1.4 in \cite{S} shows that the dimension of the poset $P$ grows arbitrary large while increasing $n$. Therefore, for large enough $n$, the graph $G_n$ becomes a non-permutation graph.

\section{Grid graphs}\label{conclusion}

In this section, we consider certain induced subgraphs of a {\em grid graph} or {\em polyominoes}.  Examples of a grid graph and some of its possible induced subgraphs are given in Figure~\ref{grid-graphs}, where the notions of  ``{\em corner graphs}'' and ``{\em skew ladder graphs}'' were invented by us.


\begin{figure}[ht]
\begin{center}
\includegraphics[scale=0.8]{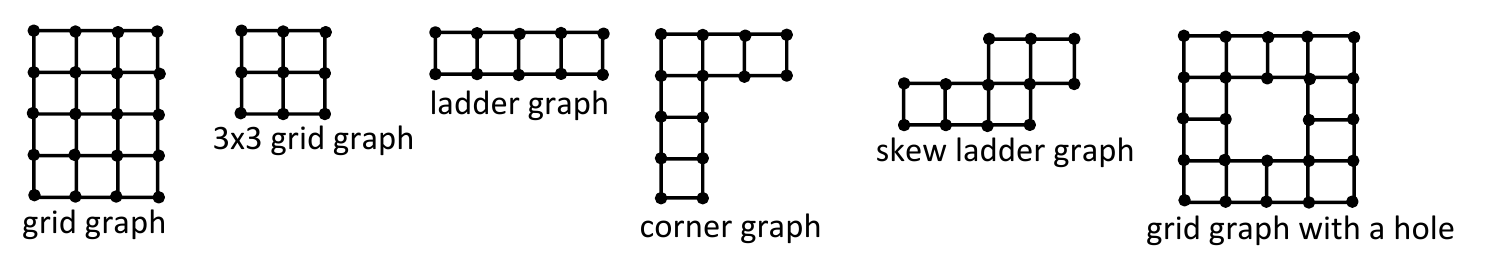}
\end{center}
\vspace{-10pt}
\caption{Induced subgraphs of a grid graph.}
\label{grid-graphs}
\end{figure}


Clearly, grid graphs with holes or grid graphs containing a $3\times 3$ grid subgraph
are not $12$-representable because of large induced cycles (cycles of length at least 8) contained in them,
which are not possible in $12$-representable graphs by Theorem~\ref{cycles-thm}.



\begin{figure}[ht]
\begin{center}
\includegraphics[scale=0.8]{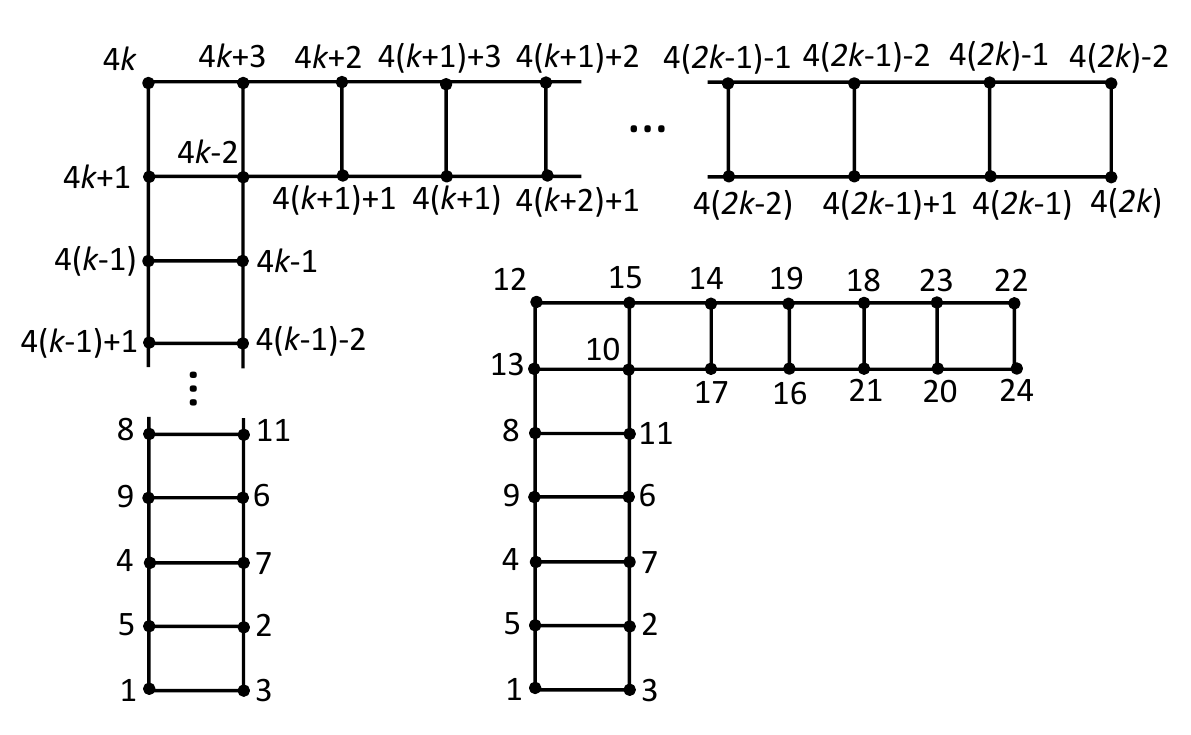}
\end{center}
\vspace{-10pt}
\caption{Labeling corner graphs to show their $12$-representability.}
\label{corner}
\end{figure}

The situation with ladder graphs, corner graphs and skew ladder graphs is different. These graphs turn out to be $12$-representable. Note that such a representability for ladder graphs follows from representability of any of the other two classes of graphs.

To  show that corner graphs are $12$-representable, one can consider labelling as shown in Figure~\ref{corner} in general case, and in case of $k=3$ to help the reader to follow the labelling. Words, $12$-representing the general and particular cases, respectively, are as follows
\begin{tiny}
$$3.51.72.94.(11)6.\cdots .(4k+1)(4(k-1)).(4k+3)(4k).(4(k+1)+1)(4k-2).{\bf 4k}.$$
\vspace{-10pt}
$$(4(k+1)+3)(4k+2).
(4(k+2)+1)(4(k+1)).\cdots.(4(2k)-1)(4(k+1)+2).(4(2k))(4(2k-1)).(4(2k)-2)$$
\end{tiny}
and
\begin{tiny}
$$3.51.72.94.(11)6.(13)8.(15)(12).(17)(10).{\bf (12)}.(19)(14).(21)(16)(23)(18).(24)(20).(22),$$
\end{tiny}

\noindent
where the dots just help seeing the patterns in our construction, and the first word is on two lines. Note the corner element in bold that is repeated in our construction. We do not provide a careful justification of why these words work, which can be seen by direct inspection.

\begin{figure}[ht]
\begin{center}
\includegraphics[scale=0.9]{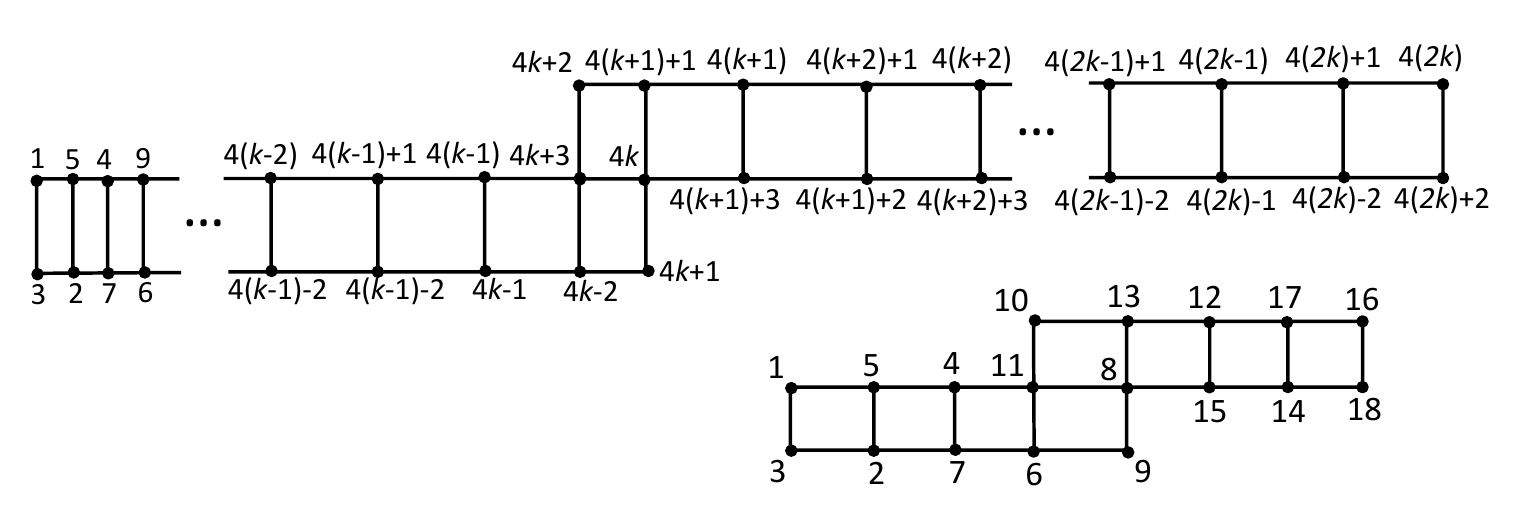}
\end{center}
\vspace{-10pt}
\caption{Labeling skew ladder graphs to show their $12$-representability.}
\label{skew-ladder}
\end{figure}

To  show that skew ladder graphs are $12$-representable, one can consider labelling as shown in Figure~\ref{skew-ladder} in general case, and in case of $k=2$ to help the reader to follow the labelling. Words, $12$-representing the general and particular cases, respectively, are as follows
\begin{tiny}
$$3.51.72.94.(11)6.\cdots.(4(k-1)+1)(4(k-2)).(4k-1)(4(k-1)-2).{\bf (4k+1)}.(4k+3)(4(k-1)).$$
\vspace{-10pt}
$$(4k+1)(4k-2).(4(k+1)+1)(4k+2).(4(k+1)+3)(4k).{\bf (4k+2)}.(4(k+2)+1)(4(k+1)).$$
\vspace{-10pt}
$$(4(k+2)+3)(4(k+1)+2).\cdots.(4(2k)+1)(4(2k-1)).(4(2k)+2)(4(2k-1)+2).(4(2k))$$
\end{tiny}
and
\begin{tiny}
$$3.51.72.{\bf 9}.(11)4.96.(13)(10).(15)8.{\bf 10}.(17)(12).(18)(14).(16),$$
\end{tiny}

\noindent
where the first word is on three line, and again, in bold we indicate repeated corner elements.

It would be interesting to know whether or not induced subgraphs of a grid graph have a nice $12$-representation classification, which we leave as an open problem along with the larger problem of finding a classification of $12$-representable graphs.

\section{Other notions of word-representable graphs}\label{other-notions}

As it is mentioned in Section~\ref{preliminaries}, apart from our main generalization, given in Definition~\ref{def-match-repr}, of the notion of a word-representable graph, we have another generalization given in Definition~\ref{exact-match-repr} below. In this section, we also state some other ways to define the notion of a (directed) graph representable by words. Our definitions can be generalized to the case of hypergraphs by simply allowing words defining edges/non-edges be over alphabets containing more than two letters.  However, the focus of this paper was studying $12$-representable graphs, so we leave all the notions introduced below for a later day to study.

Given a word $u =u_1 \ldots u_j \in \P^*$ such
that $\red(u) =u$, and a word $w = w_1 \ldots w_n \in \P^*$,
we say that the pattern $u$ {\em occurs} in $w$ if there exist
$1\leq i_1 < \cdots < i_j \leq n$ such that
$\red(w_{i_1} \ldots w_{i_j}) = u$, and that $w$ {\em avoids}
$u$ if $u$ does not occur in $w$.

Given a word $v =v_1 \ldots v_j \in \P^*$
and a word $w = w_1 \ldots w_n \in \P^*$,
we say that $v$ {\em exactly occurs} in $w$ if there exist
$1\leq i_1 < \cdots < i_j \leq n$ such that
$w_{i_1} \ldots w_{i_j} = v$ and that $w$ {\em exactly avoids}
$v$ if $v$ does not exactly occur in $w$.  We say
that $w$ has an {\em exact $v$-match starting at position $i$} if
$w_i w_{i+1} \ldots w_{i+j-1} =v$.

Similar definitions can
be made for set of words.  That is, let
$\Gamma$ be a set of words in $\P^*$ such that $\red(u) =u$ for
all $u \in \Gamma$.
Then we say that $\Gamma$ {\em occurs} in $w=w_1 \ldots w_n \in \P^*$
if there exist
$1\leq i_1 < \cdots < i_j \leq n$ such that
$\red(w_{i_1} \ldots w_{i_j}) \in \Gamma$, and that $w$ {\em avoids}
$\Gamma$ if $\Gamma$ does not occur in $w$.  We say
that $w$ has a {\em $\Gamma$-match starting at position $i$} if
$\red(w_i w_{i+1} \ldots w_{i+j-1}) \in \Gamma$.
Similarly, if $\Delta$ is any set of words in $\P^*$,
we say that $\Delta$ {\em exactly occurs} in $w=w_1 \ldots w_n \in \P^*$
if there exist
$1\leq i_1 < \cdots < i_j \leq n$ such that
$w_{i_1} \ldots w_{i_j} \in \Delta$, and that $w$ {\em exactly avoids}
$\Delta$ if $\Delta$ does not occur in $w$.  We say
that $w$ has an {\em exact $\Delta$-match starting at position $i$} if
$w_i w_{i+1} \ldots w_{i+j-1} \in \Delta$.

The study of pattern avoidance and pattern containment in words and permutations is a fast growing area (see \cite{Kit} for a comprehensive introduction to the field).

We defined the notion of a $u$-representable graph in Definition~\ref{def-match-repr}. More generally, we can make the same definition for
sets of words.

\begin{definition} Let $\Gamma$ be a set of words in $\{1,2\}^*$ such
that $\red(u) =u$ for all $u \in \Gamma$.  Then we say that a graph
$G =(V,E)$, where $V \subset \P$, is {\em $\Gamma$-representable}
if there exists a word $w \in \P^*$ such that $A(w) = V$ and
for all $x,y \in V$, $xy \not \in E$ if and only if
$w_{\{x,y\}}$ has a $\Gamma$-match.
\end{definition}

\begin{definition} Let $\Gamma$ be a set of words in $\{1,2\}^*$ such
that $\red(u) =u$ for all $u \in \Gamma$.  Then we say that a graph
$G =(V,E)$, where $V \subset \P$, is {\em $\Gamma$-occurrence representable}
if there exists a word $w \in \P^*$ such that $A(w) = V$ and
for all $x,y \in V$, $xy \not \in E$ if and only if
$\Gamma$ occurs in $w_{\{x,y\}}$.
\end{definition}

In the case where $\Gamma =\{u\}$ consists of a single word, we
simply say that a graph $G$ is $u$-occurrence representable
if $G$ is $\Gamma$-occurrence representable.  For example,
the $11$-occurrence representable graphs are very simple.
That is, if a word $w= w_1 \ldots w_n$ 11-occurrence represents
a graph $G =(V,E)$, then any vertex $x$ such that
$w$ has two or more occurrences of $x$, cannot be connected to
any other vertex $y$ since 11 will always occur in $w_{\{x,y\}}$.
Let $I = \{x \in V: x \ \mbox{occurs more than once in $w$}\}$ and
$J = \{y \in V: y \ \mbox{occurs exactly once in $w$}\}$.
Then it is easy to see that the elements of $J$ must form a clique
in $G$, while the elements of $I$ form an independent set.  Thus, if $G$ is 11-occurrence representable, then $G$ consists
of a clique together with a set of isolated vertices. Clearly,
all  such graphs are 11-occurrence representable, which gives a characterisation of 11-occurrence representable graphs.

Another simple observation is that the sets of 12-representable graphs and 12-occurrences representable graphs coincide, since a word contains a 12-match if and only if it contains a 12-occurrence.

Similarly, we have the following analogues of our definition
for exact matchings and exact occurrences.

\begin{definition}\label{exact-match-repr} Let $\Delta$ be a set of words in $\P^*$.
Then we say that a graph
$G =(V,E)$, where $V \subset \P$, is
{\em exact-$\Delta$-representable}
if there is a word $w \in \P^*$ such that $A(w) = V$ and
for all $x,y \in V$, $xy \not \in E$ if and only if
$w_{\{x,y\}}$ has an exact $\Delta$-match.
\end{definition}

\begin{definition}\label{exact-occur-repr} Let $\Delta$ be a set of words in $\P^*$.
Then we say that a graph
$G =(V,E)$, where $V \subset \P$, is
{\em exact-$\Delta$-occurrence representable}
if there is a word $w \in \P^*$ such that $A(w) =  V$ and
for all $x,y \in V$, $xy \not \in E$ if and only if
$\Delta$ exactly occurs in $w_{\{x,y\}}$.
\end{definition}

Note that to avoid trivialities, while dealing with exact matchings or occurrences, the sets of words defining (non-)edges should be large and hopefully contain at least one word for each pair of vertices in $V$.
Clearly, the properties of (exact) $\Gamma$-representability and (exact) $\Delta$-occurrence representability
are hereditary.

%




Recall the definitions of the reverse $u^r$ and the complement $u^c$ in Section~\ref{preliminaries}. If $\Gamma$ is a set of words in $\P^*$, then
we let $\Gamma^r = \{u^r: u \in \Gamma\}$.
If $\Delta$ is a set of words in $u \in \{1, \ldots, n\}^*$ such
that $A(u) = \{1, \ldots, n\}$, then we let
$\Delta^c = \{u^c:u \in \Delta\}$. Then we have the following
observation generalizing and extending Observation~\ref{lem:reverse-u}.

\begin{observation}\label{lem:reverse}
Let $G =(V,E)$ be a graph, $\Gamma$ be a set
of words in  $\P^*$ such that $\red(u) =u$ for all $u \in \Gamma$. Then
\begin{enumerate}
 \item $G$ is $\Gamma$-representable if and only if $G$ is
$\Gamma^r$-representable.

\item $G$ is $\Gamma$-occurrence representable if and only if $G$ is
$\Gamma^r$-occurrence representable.
\end{enumerate}
\end{observation}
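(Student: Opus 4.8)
The plan is to reduce everything to the single observation that reversal of words is compatible with all the operations involved: taking the subword $w_{\{x,y\}}$, the reduction map $\red$, and passing from a word (or set of words) to its reverse. Concretely, the first step is to record the following elementary lemma. For a word $w = w_1\ldots w_n \in \P^*$ and $\Gamma$ a set of words with $\red(u)=u$ for all $u\in\Gamma$: (a) $w$ has a $\Gamma$-match if and only if $w^r$ has a $\Gamma^r$-match, and (b) $\Gamma$ occurs in $w$ if and only if $\Gamma^r$ occurs in $w^r$. For (a), a $\Gamma$-match in $w$ starting at position $i$ means $\red(w_i\ldots w_{i+j-1}) = u$ for some $u\in\Gamma$ of length $j$; the factor $w_i\ldots w_{i+j-1}$ becomes, under reversal, the factor $(w^r)_{n-i-j+2}\ldots(w^r)_{n-i+1}$, and since $\red$ commutes with reversal its reduction is $u^r\in\Gamma^r$. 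For (b), given indices $i_1<\cdots<i_j$ witnessing an occurrence of $u\in\Gamma$ in $w$, the indices $i'_\ell = n+1-i_{j+1-\ell}$ witness an occurrence of $u^r\in\Gamma^r$ in $w^r$. In both cases the converse is the same argument applied to $\Gamma^r$, using that $(\Gamma^r)^r=\Gamma$ and $(w^r)^r=w$.

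The second step is the equally elementary fact that deleting letters commutes with reversal, i.e. $(w^r)_{\{x,y\}} = (w_{\{x,y\}})^r$ for all $x,y$, and that $A(w^r)=A(w)$.

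The third step assembles the argument. Suppose $w$ $\Gamma$-represents $G=(V,E)$, so $A(w)=V$ and for all $x,y\in V$ we have $xy\notin E$ if and only if $w_{\{x,y\}}$ has a $\Gamma$-match. Then $A(w^r)=V$, and by steps one and two, $(w^r)_{\{x,y\}} = (w_{\{x,y\}})^r$ has a $\Gamma^r$-match if and only if $w_{\{x,y\}}$ has a $\Gamma$-match, i.e. if and only if $xy\notin E$. Hence $w^r$ $\Gamma^r$-represents $G$, proving one direction of (1); the reverse direction is obtained by applying this to $\Gamma^r$ in place of $\Gamma$ and invoking $(\Gamma^r)^r=\Gamma$. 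The proof of (2) is word-for-word the same, replacing ``has a $\Gamma$-match'' by ``$\Gamma$ occurs in'' throughout and using part (b) of the lemma instead of part (a). This also recovers Observation~\ref{lem:reverse-u} as the special case $\Gamma=\{u\}$.

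There is no real obstacle here: the entire content is the bookkeeping of indices under the reversal map, and the only point requiring a moment's care is to check that reversal genuinely commutes with both the subword operation $w\mapsto w_{\{x,y\}}$ and the reduction $\red$, and that $(\cdot)^r$ is an involution on words and on sets of words. Once these are in place, the biconditionals chain together mechanically.
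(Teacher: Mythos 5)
Your proposal is correct and follows exactly the route the paper intends: the paper justifies this observation (as it did Observation~\ref{lem:reverse-u}) by the single fact that $w$ has a $\Gamma$-match (resp.\ $\Gamma$-occurrence) if and only if $w^r$ has a $\Gamma^r$-match (resp.\ $\Gamma^r$-occurrence), combined with reversal commuting with $\red$ and with passing to $w_{\{x,y\}}$. You have merely written out the index bookkeeping that the paper leaves implicit, so there is nothing to correct.
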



Recall the definition of the supplement $\overline{G}$ of a graph $G$ given in Section~\ref{preliminaries}.
The following observation generalizes and extends Observation~\ref{lem:complement-u}.

\begin{observation}\label{lem:complement}
Let $G =(V,E)$ be a graph, and
$\Delta$ be a set of words in $\{1, \ldots, n\}^*$ such
that $A(u) = \{1, \ldots, n\}$ for all $u \in \Delta$. Then
\begin{enumerate}
 \item $G$ is $\Delta$-representable if and only if $\overline{G}$ is
$\Delta^c$-representable.

\item $G$ is $\Delta$-occurrence representable if and only if $\overline{G}$ is
$\Delta^c$-occurrence representable.
\end{enumerate}
\end{observation}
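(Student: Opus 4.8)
The plan is to lift the single-word argument behind Observation~\ref{lem:complement-u} to sets of patterns and to exact (rather than reduced) matching, by pairing each representing word $w$ with its letter-complement $w^c$. Recall from Section~\ref{preliminaries} that $\overline{G}$ denotes the \emph{supplement} of $G=([n],E)$, namely the relabeling sending each vertex $x$ to $n+1-x$, so that $xy\in E$ if and only if $(n+1-x)(n+1-y)$ is an edge of $\overline{G}$; in particular $\overline{G}$ has vertex set $[n]$ and $\overline{\overline{G}}=G$. Because every $u\in\Delta$ satisfies $A(u)=\{1,\ldots,n\}$, all patterns share the largest letter $n$, so the complements $u^c=(n+1-u_1)\cdots(n+1-u_j)$, and hence $\Delta^c$, are formed with respect to this single value of $n$; the same $n$ is used to form $w^c$ once $A(w)=[n]$. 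By ``$\Delta$-representable'' I read the notion of Definition~\ref{exact-match-repr} (exact-$\Delta$-representable), and I will establish the forward implication; the converse then follows by symmetry from $(\Delta^c)^c=\Delta$ and $\overline{\overline{G}}=G$, while part~2 is identical with ``exact match'' replaced by ``exact occurrence''.

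The heart of the argument is that the order-reversing bijection $\varphi:a\mapsto n+1-a$ of $[n]$ carries exact $\Delta$-matches to exact $\Delta^c$-matches. First I would observe that applying $\varphi$ letter by letter to a word $s$ over $[n]$ yields $s^c$, and that a contiguous block $s_i\cdots s_{i+j-1}$ equals a pattern $u\in\Delta$ exactly when the corresponding block of $s^c$ equals $u^c\in\Delta^c$; hence $s$ has an exact $\Delta$-match if and only if $s^c$ has an exact $\Delta^c$-match. Second, I would check that complementation commutes with restriction: for $a,b\in[n]$ the positions of $w^c$ bearing a letter of $\{a,b\}$ are exactly the positions of $w$ bearing a letter of $\{n+1-a,n+1-b\}$, and on these positions $w^c=\varphi(w)$, so $(w^c)_{\{a,b\}}$ is precisely the $\varphi$-image of $w_{\{n+1-a,\,n+1-b\}}$. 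Combining the two facts, $(w^c)_{\{a,b\}}$ has an exact $\Delta^c$-match if and only if $w_{\{n+1-a,\,n+1-b\}}$ has an exact $\Delta$-match.

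With this transfer established the conclusion drops out. Suppose $w$ exact-$\Delta$-represents $G$, fix $a,b\in[n]$, and put $x=n+1-a$, $y=n+1-b$. By the definition of the supplement, $ab$ is a non-edge of $\overline{G}$ if and only if $xy$ is a non-edge of $G$, which by the representability of $G$ happens if and only if $w_{\{x,y\}}$ has an exact $\Delta$-match, which by the previous paragraph happens if and only if $(w^c)_{\{a,b\}}$ has an exact $\Delta^c$-match. Since $A(w)=[n]$ forces $A(w^c)=[n]=V(\overline{G})$, the word $w^c$ exact-$\Delta^c$-represents $\overline{G}$. For part~2 one repeats the argument with the contiguous blocks $s_i\cdots s_{i+j-1}$ replaced by index subsequences $s_{i_1}\cdots s_{i_j}$ with $i_1<\cdots<i_j$; as $\varphi$ acts position-wise it preserves subsequence occurrences exactly as it preserves contiguous ones, giving the stated equivalence for occurrence representability.

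The main difficulty is bookkeeping rather than mathematics: one must hold a single ambient $n$ fixed across the three simultaneous complementations (of the word, of the pattern set, and of the vertex labels), which is precisely what the hypothesis $A(u)=\{1,\ldots,n\}$ for all $u\in\Delta$ secures, and one must take care not to confuse the global map $\varphi$ (defined with respect to $n$) with the self-complement of a two-letter subword taken relative to its own maximum. Once $\varphi$ is pinned down globally, both the restriction-commutes-with-complementation step and the pattern-transfer step are routine verifications, and the whole statement is the natural generalization of Observation~\ref{lem:complement-u} from one reduced word under ordinary matching to a family of exact patterns under exact matching and exact occurrence.
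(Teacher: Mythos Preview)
Your argument is correct and is exactly the approach the paper has in mind: the paper gives no separate proof of this observation, relying instead on the one-line justification preceding Observation~\ref{lem:complement-u} (that $w$ witnesses $u$-representability of $G$ if and only if $w^c$ witnesses $u^c$-representability of $\overline{G}$), which you have carefully unpacked and extended to sets of patterns and to exact matches/occurrences. Your explicit handling of the ambient $n$ and of the commutation of complementation with restriction to two letters fills in precisely the bookkeeping the paper leaves implicit; the only interpretive choice you make---reading ``$\Delta$-representable'' as the exact-match notion of Definition~\ref{exact-match-repr}---is reasonable given the paper's use of $\Delta$ for exact patterns, and in any case the same $w\mapsto w^c$ transfer works verbatim for the reduced-match reading as well.
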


Given two words $u,v \in \P^*$, we say that
$u$ and $v$ are {\em matching-representation Wilf-equivalent} (resp., {\em occurrence-representation Wilf-equivalent})
if for any graph $G$, a
labeling of $G$ that is $u$-matching (resp. $u$-occurrence)
representable exists if and only if a labeling
of $G$ that is $v$-matching (resp., $v$-occurrence)
representable exists.
Note, that Observations~\ref{lem:reverse} and~\ref{lem:complement} show that the matching-representation
and occurrence-representation Wilf-equivalence classes
are closed under reversal and complement.

Our notion of using patterns to represent graphs can
also be extended to give us a notion of representing
directed graphs via words.  That is, suppose that
we are given a directed graph $G=(V,E)$, where $E \subset V \times V$
and we are given two sets of words $\Gamma,\Delta$ in $\P^*$ such that
$\red(u) =u$ for all $u \in \Gamma$ and $\red(v) =v$ for
all $v \in \Delta$.

\begin{definition} We say that a directed graph
$G =(V,E)$, where $V \subset \P$, is
{\em $\Gamma,\Delta$-representable}
if there is a word $w \in \P^*$ such that $A(w) = V$ and
for all pairs $x<y$ in $V$, $(x,y) \not \in E$ if and only if
$w_{\{x,y\}}$ has a $\Gamma$-match and
$(y,x) \not \in E$ if and only if
$w_{\{x,y\}}$ has a $\Delta$-match.
\end{definition}

\begin{definition} We say that a graph
$G =(V,E)$, where $V \subset \P$, is {\em $\Gamma,\Delta$-occurrence representable}
if there is a word $w \in \P^*$ such that $A(w) = V$ and
for all pairs $x<y$ in $V$, $(x,y) \not \in E$ if and only if
$\Gamma$ occurs in $w_{\{x,y\}}$  and
$(y,x) \not \in E$ if and only if
$\Delta$ occurs in $w_{\{x,y\}}$.
\end{definition}

We can make similar definitions for exact matching and exact
occurrences. That is, let $\Gamma$ and $\Delta$ be two
sets of words in $\P^*$.

\begin{definition}
We say that a directed graph
$G =(V,E)$, where $V \subset \P$, is {\em exact $\Gamma,\Delta$-representable}
if there is a word $w \in \P^*$ such that $A(w) = V$ and
for all pairs $x<y$ in $V$, $(x,y) \not \in E$ if and only if
$w_{\{x,y\}}$ has an exact $\Gamma$-match and
$(y,x) \not \in E$ if and only if
$w_{\{x,y\}}$ has an exact $\Delta$-match.
\end{definition}

\begin{definition} We say that a graph
$G =(V,E)$, where $V \subset \P$, is
{\em exact-$\Gamma,\Delta$-occurrence representable}
if there is a word $w \in \P^*$ such that $A(w) = V$ and
for all pairs $x<y$ in $V$, $(x,y) \not \in E$ if and only if
$\Gamma$ exactly occurs in $w_{\{x,y\}}$  and
$(y,x) \not \in E$ if and only if
$\Delta$ exactly occurs in $w_{\{x,y\}}$.
\end{definition}

We can obtain other notions of word-representability by
mixing $\Gamma$-matches, exact $\Gamma$-matches, $\Gamma$-occurrences, and
exact $\Gamma$-occurrences with $\Delta$-matches,
exact $\Delta$-matches, $\Delta$-occurrences, and
exact $\Delta$-occurrences in the definitions above.


\end{document}